
\documentclass[final]{gCOV2e}



\theoremstyle{plain}
\newtheorem{theorem}{Theorem}[section]
\newtheorem{corollary}[theorem]{Corollary}

\newtheorem{proposition}[theorem]{Proposition}

\theoremstyle{definition}

\newtheorem{example}[theorem]{Example}

\theoremstyle{remark}
\newtheorem{remark}[theorem]{Remark}

\def\M{\mathcal{M}}
\def\Level{\mathcal{L}}

\def\R#1{\mathbb{R}^{#1}}

\def\half#1#2{\begin{matrix}\frac{#1}{#2}\end{matrix}}
\def\F{\mathbb{A}}

\DeclareMathOperator{\trace}{\mathrm{tr}}
\DeclareMathOperator{\im}{\mathrm{Im}}
\DeclareMathOperator{\diver}{\mathrm{div}}
\DeclareMathOperator{\re}{\mathrm{Re}}

\begin{document}

\jvol{00} \jnum{00} \jyear{2018} \jmonth{November}


\title{\textit{New construction techniques for  minimal surfaces}}

\author{
\name{ Jens Hoppe \textsuperscript{a}$^\flat$\thanks{$^\flat$ Email: jhoppe@ihes.fr} and Vladimir G. Tkachev\textsuperscript{b}$^{\ast}$\thanks{$^\ast$Corresponding author. Email: vladimir.tkatjev@liu.se}}
\affil{\textsuperscript{a} IHES, 35 Route des Chartres, F 91440 Bures Sur Yvette, France\\
\textsuperscript{b}
Department of Mathematics, Link\"oping University, 581 83 Link\"oping, Sweden}
\received{ }
}

\maketitle

\begin{abstract}
\textbf{Abstract.} It is pointed out that despite the non-linearity of the underlying equations, there do exist rather general methods that allow to generate new minimal surfaces from known ones.

\end{abstract}

\begin{keywords}
minimal surfaces;  entire solutions; perfectly harmonic functions; B\"acklund transformation
\end{keywords}

\begin{classcode}Primary 53C42, 49Q05; Secondary 53A35\end{classcode}

\section{Introduction}


Whereas minimal surfaces in $\R{3}$ have been studied for more than 250 years, astonishingly little is known about higher dimensional minimal submanifolds \cite{Meeks}. Explicit examples are scarce, and until recently no general techniques were known to solve the underlying nonlinear PDEs.  The purpose of this note is to point out solution-generating techniques and in particular to note that there exist certain subclasses of solutions to the nonlinear (minimal surface) equations for which linear superposition principles exist, respectively elementary solution-generating operations, - somewhat similar (though not identical) to the celebrated B\"acklund-transformations\footnote{B\"acklund transformations (to which generically Bianchi-permutability theorems apply) for ordinary minimal surfaces actually do exist; they were studied by Bianchi and Eisenhart \cite{Bianchi}, \cite{Eisenhart2}, \cite{Eisenhart60} (partially referring to 'Thibault' transformations) and also more recently \cite{Corro}, \cite{Bak}, \cite{Bergfeldt} (under the name Ribaucour transformations; see also \cite{RogersS}) but their generalizations to higher dimensional minimal surfaces are unclear} that exist for some 'integrable' PDEs.

Our observations apply to both
\begin{itemize} \item[(G)] nonparametric (graph) hypersurfaces $$
\M(z)=\{(x,z(x))\in \R{N+1}: x\in \R{N}\}
$$
where $\Omega$ is an open subset and
\begin{equation}\label{eq1}
\diver \frac{\nabla z}{\sqrt{1+|\nabla z|^2}}
=
\frac{1}{(1+|\nabla z|^2)^{3/2}}\left((1+|\nabla z|^2)\Delta z-\Delta_\infty z\right)=0,
\end{equation}
and
\item[(L)] the level set description,
$$
\Level_C(u)=\{x\in \R{N}: u(x)=C\},
$$
where $C\in\R{}$ is an arbitrary constant and $u(x)$ is a solution of
\begin{equation}\label{eq2}
|\nabla u|^2\Delta u-\Delta_\infty u\equiv \Delta_1 u(x)=0.
\end{equation}
\end{itemize}
$\Delta_\infty$ is the $\infty$-Laplacian, defined by
\begin{equation}\label{eq3}
\Delta_\infty f=\frac12 \nabla f\cdot \nabla |\nabla f|^2,
\end{equation}
while for arbitrary finite $p$
$$
\Delta_p f=|\nabla f|^2\Delta f+(p-2) \Delta_\infty f.
$$
Note that (L) follows from the well-known relation for the mean curvature of the level set $\Level_0(u)$
\begin{equation}\label{Hdef}
H(x)=\diver \frac{\nabla u(x)}{|\nabla u(x)|}=|\nabla u(x)|^{-3}\Delta_1 u(x),
\end{equation}
see for example  \cite{Hsiang67}. In particular,

\begin{itemize}
\item[(L0)] if $\Delta_1 u(x)=\lambda(x)u(x)$ for some continuous (non-singular on $\Level_0(u)$ )
   function $\lambda$ then the level set $\Level_0(u)$ is a minimal hypersurface.
\end{itemize}

Note  that all the results below are local in  nature, unless stated explicitly otherwise.  We write, for instance, $x\in \R{N}$ to indicate that $x$ belongs to an open domain of $\R{N}$ rather than  whole $\R{N}$.

A function $f$ is said to be \textit{perfectly harmonic} if
\begin{equation}\label{eq4}
\Delta f=\Delta_\infty f=0 .\tag{PH}
\end{equation}

A function $f$ is perfectly harmonic if  $f$ is $p$-harmonic for at least two distinct $p$. It is easy to see that perfectly harmonic functions satisfy a remarkable superposition property (cf. \cite{Hop}):

\begin{proposition}[Superposition Principle]\label{pro:super}
If both $f(x)$ and $g(y)$ are perfectly harmonic in $\R{N}$ and $\R{M}$ respectively, so is any linear combination $h(x,y)=\alpha f(x)+\beta g(y)$ in $\R{N+M}$ for any $\alpha,\beta\in \R{}$.
\end{proposition}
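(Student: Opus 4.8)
The plan is to verify the two defining identities \eqref{eq4} for $h$ by a direct pointwise computation; the whole point is that $f$ and $g$ depend on disjoint groups of variables, so that all ``cross'' contributions drop out. Write a point of $\R{N+M}$ as $(x,y)$ with $x\in\R{N}$, $y\in\R{M}$, and let $\nabla_x,\Delta_x,\nabla^2_x$ (resp.\ $\nabla_y,\Delta_y,\nabla^2_y$) denote the gradient, Laplacian and Hessian in the $x$-variables (resp.\ $y$-variables) alone. Since $\partial_{x_i}\partial_{y_j}h\equiv 0$, the full Hessian of $h$ is block-diagonal,
\begin{equation*}
\nabla^2 h=\begin{pmatrix}\alpha\,\nabla^2_x f & 0\\[2pt] 0 & \beta\,\nabla^2_y g\end{pmatrix},
\end{equation*}
and taking traces yields at once $\Delta h=\alpha\,\Delta_x f+\beta\,\Delta_y g=0$ by hypothesis. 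This disposes of the first equation.

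For the second I would rewrite the $\infty$-Laplacian from \eqref{eq3} in its Hessian form, $\Delta_\infty F=\scal{(\nabla^2 F)\nabla F}{\nabla F}$, obtained by differentiating $|\nabla F|^2$. Since $\nabla h=(\alpha\,\nabla_x f,\ \beta\,\nabla_y g)$, feeding this vector into the block-diagonal bilinear form above makes the two blocks decouple:
\begin{equation*}
\Delta_\infty h=\alpha^3\scal{(\nabla^2_x f)\nabla_x f}{\nabla_x f}+\beta^3\scal{(\nabla^2_y g)\nabla_y g}{\nabla_y g}=\alpha^3\,\Delta_\infty f+\beta^3\,\Delta_\infty g=0 .
\end{equation*}
Hence $\Delta h=\Delta_\infty h=0$, i.e.\ $h$ is perfectly harmonic. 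Everything is pointwise, so the local nature of the statement causes no trouble.

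I do not expect a genuine obstacle here: the assertion is conceptual rather than technical, namely that separated sums of $\infty$-harmonic functions remain $\infty$-harmonic despite the nonlinearity of $\Delta_\infty$. The only step worth a second look is the vanishing of the mixed terms, which in the end rests on $|\nabla h|^2=\alpha^2|\nabla_x f|^2+\beta^2|\nabla_y g|^2$, with the first summand independent of $y$ and the second of $x$, so that $\nabla|\nabla h|^2$ has no component coupling the two groups of variables. One may equally keep \eqref{eq3} in its original form $\Delta_\infty h=\tfrac12\,\nabla h\cdot\nabla|\nabla h|^2$, which exhibits the same cancellation and the same $\alpha^3,\beta^3$ weights even more transparently.
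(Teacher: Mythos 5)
Your proof is correct: the block structure of the Hessian and the decomposition $\nabla h=(\alpha\nabla_x f,\beta\nabla_y g)$ give exactly $\Delta h=\alpha\Delta_x f+\beta\Delta_y g$ and $\Delta_\infty h=\alpha^3\Delta_\infty f+\beta^3\Delta_\infty g$, which is the intended argument. The paper itself omits the verification (``It is easy to see'', cf.\ \cite{Hop}), and your direct pointwise computation is precisely the calculation it leaves to the reader, so nothing further is needed.
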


The superposition principle allows one to combine perfectly harmonic functions to get diverse minimal hypersurfaces using \eqref{eq1} or \eqref{eq2}. Below we consider some applications of this method and discuss diverse examples of perfectly harmonic functions.

\section{Perfectly harmonic functions in lower dimensions}
In $\R{1}$, the only perfectly harmonic functions are the affine functions
$$
f_1=a+b_1x_1.
$$
Furthermore,  an old result of Aronsson (Theorem~2 in \cite{Aronsson68}) says that in the two-dimensional case the only perfectly harmonic functions in (a domain of) $\R{2}$ are affine functions $$
f_2=a+b_1x_1+b_2x_2
$$
and the (scaled and shifted) polar angle
\begin{equation}\label{eq5}
g_2=a+b \arctan \frac{x_2}{x_1}.
\end{equation}
This yields a complete description of perfectly harmonic functions in the first two dimensions $N\le 2$.

Applying the superposition principle to $f_1$ and $g_2$,  one obtains
$$
f_3(x_1,x_2,x_3)=x_3-\arctan \frac{x_2}{x_1},
$$
whose level sets are classical helicoids in $\R{3}$. Note that $f_3$ is a perfectly harmonic function in $\R{3}$.

Superposition of $g_2$ with an arbitrary perfectly harmonic function yields the following observation.

\begin{corollary}
Let $f(x)$ be a perfectly harmonic function in $\R{N}$. Then
$$
x_{N+2}=x_{N+1}\tan f(x)
$$
defines a minimal hypersurface in $\R{N+2}$.
\end{corollary}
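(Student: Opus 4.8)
The plan is to realize the asserted hypersurface as the zero level set of an explicit perfectly harmonic function on $\R{N+2}$ and then apply (L0). Write the coordinates of $\R{N+2}$ as $(x,x_{N+1},x_{N+2})$ with $x\in\R{N}$, and set
\[
h(x,x_{N+1},x_{N+2})=\arctan\frac{x_{N+2}}{x_{N+1}}-f(x),
\]
defined on the open set where $x_{N+1}\ne 0$ and $f$ is defined. By Aronsson's theorem the map $(x_{N+1},x_{N+2})\mapsto\arctan(x_{N+2}/x_{N+1})$ is (on a domain of $\R{2}$) the perfectly harmonic function $g_2$ of \eqref{eq5}, with the roles of the two planar coordinates interchanged; by hypothesis $f$ is perfectly harmonic in $\R{N}$. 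Proposition~\ref{pro:super} (applied with coefficients $\alpha=-1$, $\beta=1$) then yields that $h$ is perfectly harmonic in $\R{N+2}$, i.e. $\Delta h=\Delta_\infty h=0$.

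Next I would observe that $\Delta_1 h=|\nabla h|^2\Delta h-\Delta_\infty h\equiv 0$, so $h$ satisfies the hypothesis of (L0) with $\lambda\equiv 0$, and it remains only to check regularity of the level set. The $(x_{N+1},x_{N+2})$-block of $\nabla h$ equals $\frac{1}{x_{N+1}^2+x_{N+2}^2}(x_{N+2},-x_{N+1})$, which never vanishes on the domain of $h$; hence $\nabla h\ne 0$ on $\mathcal{L}_0(h)$ and (L0) gives that $\mathcal{L}_0(h)$ is a minimal hypersurface. Finally I would identify $\mathcal{L}_0(h)$ with the set in the statement: on $\{h=0\}$ we have $\arctan(x_{N+2}/x_{N+1})=f(x)$, and applying $\tan$ gives $x_{N+2}=x_{N+1}\tan f(x)$; conversely, since the assertion is local, near any point we may (after subtracting a suitable constant multiple of $\pi$ from $f$, which leaves it perfectly harmonic) assume $f$ takes values in $(-\pi/2,\pi/2)$, and there the equation $x_{N+2}=x_{N+1}\tan f(x)$ is equivalent to $h=0$.

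The only delicate points are the bookkeeping of branches of $\arctan$ and the exclusion of the singular locus $\{x_{N+1}=x_{N+2}=0\}$, both of which are harmless because all statements here are local and $f$ modified by a constant remains perfectly harmonic. I expect no genuine obstacle: the content of the corollary is entirely carried by the Superposition Principle together with Aronsson's classification. One could alternatively verify \eqref{eq1} directly for the graph $z(x,x_{N+1})=x_{N+1}\tan f(x)$, but that computation is longer and far less transparent than the level-set argument above.
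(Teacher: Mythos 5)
Your argument is exactly the paper's own: form $h=\arctan(x_{N+2}/x_{N+1})-f(x)$ (the paper uses the negative of this), invoke Aronsson's description of $g_2$ and Proposition~\ref{pro:super} to get perfect harmonicity, and conclude via the level-set criterion that $\{h=0\}$, i.e. $x_{N+2}=x_{N+1}\tan f(x)$, is minimal. The extra care you take with regularity of $\nabla h$ and with branches of $\arctan$ is sound but only fleshes out details the paper leaves implicit.
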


\begin{proof}
The function $g=\arctan (x_{N+2}/x_{N+1})$
is a perfectly harmonic function of two independent variables $x_{N+1}$ and $x_{N+2}$, hence by Proposition~\ref{pro:super}
$$
h(x,x_{N+1},x_{N+2}):=f(x)-\arctan\frac{x_{N+2}}{x_{N+1}}
$$
is a perfectly harmonic function in $\R{N+2}$, hence by virtue of (L) the zero level set $h=0$ is a minimal hypersurface, as desired.
\end{proof}

It follows immediately from (\ref{eq1}) and (\ref{eq2}) that to any perfectly harmonic function, one can associate several different minimal hypersurfaces:

%

\begin{enumerate}
 \item[(i)] an (L)-representation $\Level_C(f)$ in $\R{N}$;
\item[(ii)] a (G)-representation $\M(f)$ in $\R{N+1}$;
 \item[(iii)] a (G)-representation $\M(x_{N+1}\tan f(x))$ in $\R{N+2}$.
\end{enumerate}

\section{Eigenfunctions of $\Delta_1$}\label{sec:eigen}

Let us  write
\begin{equation}\label{eigenf1}
u(x)\equiv v(x)\mod w(x)
\end{equation}
if $u(x)-v(x)=\mu(x)w(x)$ for some continuous $\mu$.

A $C^2$-function $u(x)$ is said to be an \textit{eigenfunction} of $\Delta_1$ if the zero-level set of $u(x)$ has a  nonempty regular part and
\begin{equation}\label{eigenf}
\Delta_1 u(x)\equiv 0\mod u(x).
\end{equation}
The (continuous) function $\mu(x):=\Delta_1 u(x)/u(x)$ is said to be the \textit{weight} of the eigenfunction.


\begin{remark}
Our definition of eigenfunction of $\Delta_1$ should not be confused with the standard  definition for the $p$-Laplacian, $1<p<\infty$ which comes from a variational problem. On the other hand, it is well known that the variational definition does not work for the exceptional case $p=1$.
\end{remark}

Of course, the condition \eqref{eigenf} is nontrivial only along the zero set of $u$. Then the regularity  condition in the above definition implies by virtue of (L) and \eqref{eigenf} the (regular part of) zero-level set of an eigenfunction is a minimal hypersurface.

Hsiang \cite{Hsiang67} was probably the first to consider  (algebraic) eigenfunctions of $\Delta_1$. In particular, he asked to classify all homogeneous degree 3 polynomials with weight function $|x|^2$, radial eigencubics in terminology \cite{Tk10c}. It is known that the latter problem is intimately connected to Jordan algebras \cite{NTVbook}.

Below we consider some elementary properties of eigenfunctions of $\Delta_1$.
Let $\phi(t)$ is a $C^2$-function such that $\phi(0)=0$. Then it follows immediately from the first identity in \eqref{Hdef} that if $u$ is an eigenfunction of $\Delta_1$, so is  the composition $\phi(u(x))$, and
$$
\Delta_1 \phi(u(x))=\psi(x)\phi(u(x)),
$$
where the  weight function $\psi(x)=\mu(x)\frac{u(x)\phi'^3(u(x))}{\phi(u(x))}$ is obviously continuous.

A less trivial observation is that multiplication by \textit{any} smooth function preserves the property being eigenfunction. More precisely, we have

\begin{proposition}
\label{pro:delta}
Let $v(x)$ be a $C^2$-function, $v(x)\ne0$. Then $u(x)$ is an eigenfunction of $\Delta_1$ if and only if $v(x)u(x)$ is.
\end{proposition}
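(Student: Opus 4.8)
The plan is to reduce the proposition to the single algebraic identity
\begin{equation}\label{eq:key}
\Delta_1(vu)=v^3\,\Delta_1 u+u\,g,
\end{equation}
valid for all $C^2$ functions $u,v$, where $g$ is a function expressed polynomially in $u,v$ and their first and second derivatives and is therefore automatically continuous. Granting \eqref{eq:key}, both directions follow at once. If $u$ is an eigenfunction, write $\Delta_1 u=\mu u$ with $\mu\in C^0$; then $\Delta_1(vu)=v^3\mu u+ug=u\,(v^3\mu+g)=(vu)\cdot\frac{v^3\mu+g}{v}$, and since $v$ is continuous and nowhere zero the function $(v^3\mu+g)/v$ is a continuous weight. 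Moreover $\Level_0(vu)=\Level_0(u)$ because $v\neq 0$, and $\nabla(vu)=v\nabla u$ on this set, so a point is regular for $vu$ exactly when it is regular for $u$; hence $vu$ is again an eigenfunction. For the converse no new computation is needed: $1/v$ is $C^2$ and nowhere zero, so applying the implication just proved to $vu$ with multiplier $1/v$ shows that $u=(1/v)(vu)$ is an eigenfunction whenever $vu$ is.

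Thus the whole content is \eqref{eq:key}, which I would prove by direct expansion. Put $w=vu$, so that $w_i=v_iu+vu_i$ and $w_{ij}=v_{ij}u+v_iu_j+v_ju_i+vu_{ij}$, and substitute into $\Delta_1 w=|\nabla w|^2\Delta w-\Delta_\infty w$ with $\Delta_\infty w=\sum_{i,j}w_iw_jw_{ij}$. The result is a polynomial in $u,v,\nabla u,\nabla v,D^2u,D^2v$. Isolate the monomials carrying no explicit factor $u$: in $|\nabla w|^2\Delta w$ these come from $v^2|\nabla u|^2\cdot(2\langle\nabla u,\nabla v\rangle+v\Delta u)$, while in $\Delta_\infty w$ they come from $\sum_{i,j}v^2u_iu_j(v_iu_j+v_ju_i+vu_{ij})$, which equals $2v^2|\nabla u|^2\langle\nabla u,\nabla v\rangle+v^3\Delta_\infty u$. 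The two copies of $2v^2|\nabla u|^2\langle\nabla u,\nabla v\rangle$ cancel and what survives is $v^3|\nabla u|^2\Delta u-v^3\Delta_\infty u=v^3\Delta_1 u$. Every remaining monomial contains a factor $u$ (in fact $u$ or $u^2$); pulling one factor of $u$ out of their sum defines $g$, which is continuous since $u,v\in C^2$. A useful consistency check is that $\Delta_1$ is homogeneous of degree three, $\Delta_1(cu)=c^3\Delta_1 u$ for a constant $c$, which is precisely \eqref{eq:key} in the case $v\equiv c$.

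The only real obstacle is arithmetic discipline: one must single out the $u$-free monomials correctly and verify that the cancellation is exact, so that they sum to $v^3\Delta_1 u$ with no leftover terms. Once \eqref{eq:key} is established, everything else — continuity of the new weight, invariance of the zero set and of its regular part (both because $v$ never vanishes), and the $v\mapsto 1/v$ trick yielding the converse — is immediate.
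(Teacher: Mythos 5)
Your proposal is correct and follows essentially the same route as the paper: your identity $\Delta_1(vu)=v^3\Delta_1 u+u\,g$ is exactly the paper's congruence $\Delta_1(uv)\equiv v^3\Delta_1 u \bmod u$, obtained by the same cancellation of the $2v^2|\nabla u|^2\,\nabla u\cdot\nabla v$ terms, and the converse via replacing $v$ by $1/v$ is also the paper's argument. Your extra remark that the zero set and its regular part are unchanged because $v\neq 0$ is a small, welcome addition but not a departure.
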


\begin{proof}
We claim that
\begin{equation}\label{eqgf3}
\Delta_1 uv\equiv v^3 \Delta_1 u\mod u.
\end{equation}
Indeed, we have $|\nabla (uv)|^2\equiv v^2|\nabla u|^2\mod u,$
\begin{equation*}
\begin{split}
\Delta uv&\equiv v\Delta u+2\nabla u\cdot \nabla v \mod u,
\end{split}
\end{equation*}
and applying  an obvious identity $\nabla (uw)\equiv w\nabla u\mod u$ we obtain
\begin{equation*}
\begin{split}
\Delta_\infty uv&\equiv \half12 v\nabla u\cdot \nabla(v^2|\nabla u|^2 +2uv \nabla u\cdot \nabla v+u^2|\nabla v|^2)\\
&\equiv \half12 v\nabla u\cdot \bigl(\nabla(v^2|\nabla u|^2) +2v (\nabla u\cdot \nabla v)\nabla u\bigr)\\
&\equiv \half12 v\nabla u\cdot \bigl(2v|\nabla u|^2\nabla v+v^2\nabla |\nabla u|^2 +2v (\nabla u\cdot \nabla v)\nabla u\bigr)\\
&\equiv v^3\Delta_\infty u+2v^2|\nabla u|^2\nabla u\cdot \nabla v \mod u.
\end{split}
\end{equation*}
Combining the obtained formulas yields (\ref{eqgf3}).

Next, let $\Delta_1 u(x)=\mu(x)u(x)$ for some continuous $\mu$. We have from \eqref{eqgf3} that there exists some continuous $\nu(x)$ such that
$$
\Delta_1 uv=v^3 \Delta_1 u+\nu u=\biggl(\frac{\nu}{v}+v^2\mu\biggr) uv
$$
which implies the "only if" conclusion. The "if" statement follows by replacing $v\to 1/v$.
\end{proof}

The geometric meaning of  the made observation is clear: the zero-level set of both the composed function  $\phi(u(x))$ and the product $v(x)u(x)$ coincides with that of $u(x)$.

\section{Perfectly harmonic functions from orthogonal twin-harmonics}
As we have already seen, perfectly harmonic functions can be thought as building blocks to construct minimal hypersurfaces. This motivates the problem to classify all perfectly harmonic functions. In this section and below we discuss some particular results in this direction.

Our first step is to generalize  (\ref{eq5}). Two functions $u(x)$ and $v(x)$ in $\R{N}$ are said to be \textit{orthogonal twin-harmonics} if
\begin{align}
\frac{\Delta u}{u}&=\frac{\Delta v}{v}, \label{eq9} \\
  \frac{\Delta_\infty u}{u}&=\frac{\Delta_\infty v}{v},\label{eq10}\\
|\nabla u|^2=|\nabla v|^2, &\quad \nabla u\cdot \nabla v=0.\label{eq11}
\end{align}
Notice that \eqref{eq9} and \eqref{eq11} together imply that $u$ and $v$ are eigenfunctions of $\Delta_1$ with the same weight function.

It is easy to see that rotations and dilatations preserve the property being orthogonal twin-harmonics. More precisely, if $\textbf{w}(x):=(u(x),v(x))^t$ are orthogonal twin-harmonics in $\R{N}$ then so are the pairs $T \textbf{w}(x)$, where $T$ is an element of the linear conformal group (i.e. $T^tT=cI$ for some real $c\ne0$). In general we have

\begin{proposition}
\label{pro:tan}
If $(u(x),v(x))$ are orthogonal twin-harmonics in $\R{N}$ then
\begin{equation}\label{eq8}
f(z)=\arctan \frac{v(x)}{u(x)}
\end{equation}
is a perfectly harmonic function in $\R{N}$, wherever the arctan function is well defined.
\end{proposition}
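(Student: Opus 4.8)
The plan is to compute $\Delta f$ and $\Delta_\infty f$ directly for $f = \arctan(v/u)$ and show both vanish, using the twin-harmonic relations \eqref{eq9}--\eqref{eq11}. First I would introduce the auxiliary function $\rho^2 = u^2 + v^2$ and record the gradient: writing $t = v/u$, one has $\nabla f = \frac{1}{1+t^2}\nabla t = \frac{u\nabla v - v\nabla u}{u^2+v^2}$. A convenient reformulation is $\nabla f = \frac{u\nabla v - v\nabla u}{\rho^2}$, and I expect that the key preliminary computation is $|\nabla f|^2 = \frac{|\nabla v|^2 u^2 + |\nabla u|^2 v^2 - 2uv\,\nabla u\cdot\nabla v}{\rho^4}$, which by the orthogonality and norm-equality conditions \eqref{eq11} collapses to $|\nabla f|^2 = \frac{|\nabla u|^2(u^2+v^2)}{\rho^4} = \frac{|\nabla u|^2}{\rho^2}$. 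This simplification is the payoff of the twin-harmonic hypotheses and will drive everything else.

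Next I would verify $\Delta f = 0$. Differentiating $\nabla f = \rho^{-2}(u\nabla v - v\nabla u)$, we get
\[
\Delta f = \rho^{-2}\bigl(u\Delta v - v\Delta u\bigr) + \nabla(\rho^{-2})\cdot(u\nabla v - v\nabla u).
\]
The first term vanishes by \eqref{eq9}, since $u\Delta v - v\Delta u = uv(\Delta v/v - \Delta u/u) = 0$. For the second term, $\nabla(\rho^{-2}) = -2\rho^{-4}(u\nabla u + v\nabla v)$, and $(u\nabla u + v\nabla v)\cdot(u\nabla v - v\nabla u) = u^2\nabla u\cdot\nabla v - v^2\nabla u\cdot\nabla v + uv|\nabla v|^2 - uv|\nabla u|^2$, which is $0$ again by \eqref{eq11}. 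Hence $\Delta f \equiv 0$ identically (not just mod $f$).

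For $\Delta_\infty f = \frac12 \nabla f\cdot\nabla|\nabla f|^2$, I would use the simplified expression $|\nabla f|^2 = |\nabla u|^2/\rho^2$. Then $\nabla|\nabla f|^2 = \rho^{-2}\nabla|\nabla u|^2 - 2\rho^{-4}|\nabla u|^2(u\nabla u + v\nabla v)$, so
\[
2\Delta_\infty f = \rho^{-2}\nabla f\cdot\nabla|\nabla u|^2 - 2\rho^{-4}|\nabla u|^2\,\nabla f\cdot(u\nabla u + v\nabla v).
\]
In the first term, $\nabla f\cdot\nabla|\nabla u|^2 = \rho^{-2}(u\nabla v - v\nabla u)\cdot\nabla|\nabla u|^2 = \rho^{-2}(u\,\nabla v\cdot\nabla|\nabla u|^2 - 2v\Delta_\infty u)$; the term $v\Delta_\infty u$ can be traded for $u\Delta_\infty v$ via \eqref{eq10}, and I anticipate that combined with the analogous manipulation of $\nabla u\cdot\nabla|\nabla v|^2$ (obtained by differentiating the identity $|\nabla u|^2 = |\nabla v|^2$ from \eqref{eq11}, giving $\nabla|\nabla u|^2 = \nabla|\nabla v|^2$) everything cancels. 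In the second term, $\nabla f\cdot(u\nabla u + v\nabla v) = \rho^{-2}(u\nabla v - v\nabla u)\cdot(u\nabla u + v\nabla v) = 0$ by the same computation as in the $\Delta f$ step. The main obstacle I foresee is the bookkeeping in the first term: one must differentiate $|\nabla u|^2 - |\nabla v|^2 = 0$ and $\nabla u\cdot\nabla v = 0$ to get the second-derivative relations $\nabla u\cdot\nabla|\nabla v|^2 = -\nabla v\cdot\nabla|\nabla u|^2$ type identities (i.e. $\mathrm{Hess}\,u\,(\nabla v,\nabla v) = \mathrm{Hess}\,v\,(\nabla u,\nabla v)$ etc.), and then feed these together with \eqref{eq10} into the expression so that the surviving pieces pair up to zero. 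Once those differentiated constraints are in hand the cancellation is forced, and we conclude $\Delta f = \Delta_\infty f = 0$, i.e. $f$ is perfectly harmonic wherever $u \neq 0$ so that $\arctan(v/u)$ is defined.
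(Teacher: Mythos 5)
Your route is essentially identical to the paper's: the same gradient formula, the same simplification $|\nabla f|^2 = w/(u^2+v^2)$ with $w=|\nabla u|^2=|\nabla v|^2$, the same two-term splittings of $\Delta f$ and $\Delta_\infty f$, with the second term in each killed by \eqref{eq11} and the first in $\Delta_\infty f$ by \eqref{eq10}. The only remark worth making is that the final cancellation you hedge on is more immediate than you anticipate: since $\nabla|\nabla u|^2=\nabla|\nabla v|^2=\nabla w$, one gets $(u\nabla v - v\nabla u)\cdot\nabla w = 2\bigl(u\Delta_\infty v - v\Delta_\infty u\bigr)=0$ directly from \eqref{eq10}, so no Hessian identities obtained by differentiating $\nabla u\cdot\nabla v=0$ are needed.
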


\begin{proof}
   Let us denote $w:=|\nabla u|^2=|\nabla v|^2$. Then $\nabla f=\frac{1}{u^2+v^2}(u\nabla v-v\nabla u)$, and,
using (\ref{eq9}) resp. (\ref{eq11}),
   $|\nabla f|^2=\frac{w}{u^2+v^2}$, as well as
$$
\Delta f=\frac{1}{u^2+v^2}(u\Delta v-v\Delta u) -\frac{2}{(u^2+v^2)^2}(u\nabla u+v\nabla v)\cdot(u\nabla v-v\nabla u)=0.
$$
Furthermore,
$$
\nabla |\nabla f|^2=\frac{\nabla w}{u^2+v^2}-\frac{2w(u\nabla u+v\nabla v)}{(u^2+v^2)^2}.
$$
It follows from (\ref{eq10}) that
$$
0=v\Delta_\infty u-u\Delta_\infty v=\frac12( v\nabla u -u\nabla v)\cdot \nabla w
$$
hence (using again (\ref{eq11}))
$$
2\Delta_\infty f= \frac{1}{u^2+v^2}(u\nabla v-v\nabla u)\cdot (\frac{\nabla w}{u^2+v^2}-\frac{2w(u\nabla u+v\nabla v)}{(u^2+v^2)^2})=0,
$$
as desired.
\end{proof}

An important example is the generalized helicoid constructed in \cite{CH} , which was noticed in \cite{LeeLee} to be of the form (\ref{eq8}), with $u$ and $v$ being specific twin harmonics, homogenous of degree $2$.

\begin{proposition}
\label{pro:tan1}
Let $(u(x),v(x))$ be orthogonal twin-harmonics  in $\R{N}$, $h(x)=u(x)+iv(x)$ and $k\in \R{}$. Then $(\re h(x)^k, \im h(x)^k)$ are orthogonal twin-harmonics in $\R{N}$. In particular, $(u^2(x)-v^2(x), 2u(x)v(x))$ are orthogonal twin-harmonics  in $\R{N}$.
\end{proposition}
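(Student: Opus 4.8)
The plan is to repackage both hypothesis and conclusion in terms of the single complex function $h=u+\I v$ and then to notice that the three relations defining an orthogonal twin-harmonic pair are, in this language, clearly preserved under $h\mapsto h^{k}$. Write $h_{i}=\partial_{i}h$, $h_{ij}=\partial_{i}\partial_{j}h$ and extend the Euclidean dot product bilinearly to $\Bbb C^{N}$, so that $\nabla h\cdot\nabla h=\sum_{i}h_{i}^{2}=:Q(h)$. The real and imaginary parts of $Q(h)$ are $|\nabla u|^{2}-|\nabla v|^{2}$ and $2\,\nabla u\cdot\nabla v$, so \eqref{eq11} says exactly $Q(h)\equiv0$; and \eqref{eq9} says precisely $\Delta h=\lambda_{1}h$ for a real (continuous) weight $\lambda_{1}$, since $\Delta h=\Delta u+\I\,\Delta v$ and $\Delta u/u=\Delta v/v$.

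The one nontrivial entry in this dictionary is \eqref{eq10}. For it I would first establish, by substituting $u=\tfrac12(h+\bar h)$ and $v=\tfrac1{2\I}(h-\bar h)$ into $\Delta_{\infty}f=\sum_{ij}f_{i}f_{j}f_{ij}$ (see \eqref{eq3}) and regrouping, the algebraic identity valid for every $C^{2}$ complex $h$,
\[
4\bigl(\Delta_{\infty}u+\I\,\Delta_{\infty}v\bigr)=\textstyle\sum_{ij}\bar h_{i}\bar h_{j}\bar h_{ij}+\textstyle\sum_{ij}h_{i}h_{j}\bar h_{ij}+2\textstyle\sum_{ij}h_{i}\bar h_{j}h_{ij}.
\]
The first and third terms on the right equal $\tfrac12\,\nabla\bar h\cdot\nabla\overline{Q(h)}$ and $\nabla\bar h\cdot\nabla Q(h)$, so they both drop out as soon as $Q(h)\equiv0$, leaving $\Delta_{\infty}u+\I\,\Delta_{\infty}v=\tfrac14\,\beta(h)$ with $\beta(h):=\sum_{ij}h_{i}h_{j}\bar h_{ij}$. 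Hence, under \eqref{eq11}, relation \eqref{eq10} — i.e. $v\,\Delta_{\infty}u=u\,\Delta_{\infty}v$ — is equivalent to $\im\bigl(\bar h\,\beta(h)\bigr)=0$, i.e. to $\beta(h)=\lambda_{2}h$ for a real weight $\lambda_{2}$. In sum, $(u,v)$ being orthogonal twin-harmonics is the same as the three complex conditions $Q(h)\equiv0$, $\Delta h=\lambda_{1}h$, $\beta(h)=\lambda_{2}h$, with $\lambda_{1},\lambda_{2}$ real.

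Next I would substitute $H:=h^{k}$, working locally on a simply connected domain where $h\neq0$ (with a fixed branch of $h^{k}=\exp(k\log h)$ when $k\notin\Z$; this is harmless since all statements here are local, the common zero set of $U:=\re h^{k}$ and $V:=\im h^{k}$ being that of $h$, handled by continuity as with the ``mod'' conventions of Section~\ref{sec:eigen}). From $\partial_{i}(h^{k})=kh^{k-1}h_{i}$ and $\partial_{i}\partial_{j}(h^{k})=k(k-1)h^{k-2}h_{i}h_{j}+kh^{k-1}h_{ij}$ (and the conjugate formulas for $\bar h^{k}$), together with $Q(h)\equiv0$ and $\Delta h=\lambda_{1}h$, one reads off at once
\[
Q(H)=k^{2}h^{2k-2}Q(h)=0,\qquad \Delta H=k(k-1)h^{k-2}Q(h)+kh^{k-1}\Delta h=k\lambda_{1}H,
\]
which are \eqref{eq11} and \eqref{eq9} for $(U,V)$, with continuous weight $k\lambda_{1}$; and, by the same Leibniz rule applied to $\beta$,
\[
\beta(H)=k^{3}(k-1)\,h^{2k-2}\bar h^{\,k-2}\Bigl(\textstyle\sum_{i}h_{i}\bar h_{i}\Bigr)^{2}+k^{3}h^{2k-2}\bar h^{\,k-1}\beta(h).
\]
Since $\sum_{i}h_{i}\bar h_{i}=\sum_{i}|\partial_{i}h|^{2}$ is real and $\beta(h)=\lambda_{2}h$, multiplying the last line by $\bar H=\bar h^{\,k}$ gives $\bar H\,\beta(H)=k^{3}(k-1)\bigl(\sum_{i}|\partial_{i}h|^{2}\bigr)^{2}|h|^{4k-4}+k^{3}\lambda_{2}|h|^{4k-2}$, which is real; by the displayed identity applied to $H$ (legitimate because $Q(H)\equiv0$) this says $V\,\Delta_{\infty}U=U\,\Delta_{\infty}V$, i.e. \eqref{eq10} holds for $(U,V)$. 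All three conditions being verified, $(\re h^{k},\im h^{k})$ is an orthogonal twin-harmonic pair; for $k=2$, where $h^{2}=(u^{2}-v^{2})+2\I uv$, this is the final assertion.

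I expect the real work to be concentrated in the algebraic identity for $4(\Delta_{\infty}u+\I\,\Delta_{\infty}v)$: its proof is just the substitution $u=\tfrac12(h+\bar h)$, $v=\tfrac1{2\I}(h-\bar h)$ followed by careful bookkeeping of the $i,j$ sums, but it is the only genuinely nonlinear input, and the subsequent collapse to $\tfrac14\beta(h)$ under \eqref{eq11} is then automatic. If one prefers to argue entirely in real variables, the same content reappears as the two slightly hidden identities $\sum_{ij}v_{i}v_{j}u_{ij}=-\Delta_{\infty}u$ and $\sum_{ij}u_{i}v_{j}v_{ij}=\Delta_{\infty}u$, obtained by differentiating the relations in \eqref{eq11} and contracting with $\nabla v$ resp.\ $\nabla u$. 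Everything else — the three Leibniz computations for $H=h^{k}$ and the observation that $\bar H\beta(H)$ comes out real because $\sum_{i}|\partial_{i}h|^{2}$ and $\lambda_{2}$ are — is mechanical; the only further point to state carefully is the purely local character of the argument when $k\notin\Z$.
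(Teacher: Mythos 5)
Your proposal is correct; I checked the identity $4(\Delta_\infty u+i\Delta_\infty v)=\sum_{ij}\bar h_i\bar h_j\bar h_{ij}+\sum_{ij}h_ih_j\bar h_{ij}+2\sum_{ij}h_i\bar h_jh_{ij}$, the collapse to $\tfrac14\beta(h)$ when $Q(h)\equiv0$, the three Leibniz computations for $H=h^k$, and the reality of $\bar H\beta(H)$, and they all hold. Your route is, however, organized differently from the paper's. The paper also complexifies via $h=u+iv$, but keeps a \emph{general} holomorphic $f$: it rewrites \eqref{eq10} as \eqref{eq10n}, uses the chain-rule formulas \eqref{re1} for $\nabla U,\nabla V$ to get \eqref{eq11} and \eqref{eq9} for the new pair, then computes $\nabla|\nabla U|^2\cdot\nabla U$ and $\nabla|\nabla V|^2\cdot\nabla V$ explicitly in terms of $f',f'',\rho,w$, and shows $V\Delta_\infty U-U\Delta_\infty V$ is a combination of $\im\bigl(hf'(h)/f(h)\bigr)$ and $\im\bigl(f''(h)f(h)/f'^2(h)\bigr)$, both of which vanish precisely when $f$ is a (real) power; this buys extra information, namely a criterion for \emph{which} holomorphic post-compositions preserve twin-harmonicity (e.g.\ $c\,h^k$ also works, mirroring Proposition~\ref{pro:4}(i)). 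You instead fix the power from the start and prove a clean dictionary: under \eqref{eq11}, i.e.\ $Q(h)=0$, the pair is twin-harmonic iff $\Delta h=\lambda_1h$ and $\beta(h)=\sum_{ij}h_ih_j\bar h_{ij}=\lambda_2h$ with real weights; closure under $h\mapsto h^k$ is then a short Leibniz check. This buys modularity and conceptual clarity: your condition $\beta(h)=\lambda_2h$ is exactly the real-domain analogue of the class condition \eqref{eq13}, and your identity specializes to \eqref{reh} in the genuinely holomorphic setting of Proposition~\ref{pro:51}, so the parallel between Sections 4 and 5 becomes transparent. A further small plus on your side: your computation $\Delta H=k(k-1)h^{k-2}Q(h)+kh^{k-1}\Delta h=k\lambda_1H$ keeps the term coming from $\Delta h$, which the paper's displayed formula for $\Delta U$ drops (harmless for powers, since $\re(hf'(h))$ is then proportional to $U$, but your bookkeeping is the more careful one). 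Your handling of the local/branch issues for $k\notin\Z$ and of the zero set of $h$ is at the same level of rigor as the paper's own conventions, so I see no genuine gap.
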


\begin{proof}
Let us rewrite (\ref{eq10}) as
\begin{equation}\label{eq10n}
\nabla w \cdot \nabla u=\rho u, \qquad \nabla w \cdot \nabla v=\rho v,
\end{equation}
where $w=|\nabla u|^2=|\nabla v|^2$. Let $f(z)$ be a holomorphic function and let  $U(x)=\re f\circ h(x)$ and $V(x)= \im f\circ h(x)$. Then by the holomorphy of $h$
\begin{equation}\label{re1}
\begin{split}
\nabla U&=\re f'(h) \nabla u-\im f'(h)\nabla v\\
\nabla V&=\im f'(h) \nabla u+\re f'(h)\nabla v,
\end{split}
\end{equation}
hence
\begin{equation}\label{nablau}
|\nabla U|^2=|\nabla V|^2=|f'(h)|^2w, \quad \nabla U\cdot \nabla V=0,
\end{equation}
implying (\ref{eq11}). Also using (\ref{eq11}),
$$
\Delta U=\re f''(h) ( |\nabla u|^2-|\nabla v|^2)=0,
$$
and similarly $\Delta V=0$ which yields (\ref{eq9}). Finally, since $\ln |f'(h)|=\re \ln f'(h)$, one has
\begin{equation*}
\begin{split}
\nabla |f'(h)|^2&=2|f'(h)|^2\nabla \ln |f'(h)|=
2|f'|^2\left(\re \frac{f''}{f'} \nabla u-\im \frac{f''}{f'}\nabla v\right)\\
&=2\left (\re (f''\overline{f'})\,\nabla u-\im (f''\overline{f'})\,\nabla v\right )
\end{split}
\end{equation*}
hence by (\ref{eq11})
$$
\nabla |f'(h)|^2\cdot \nabla U=2w\re (f''(h)\overline{f'^2(h)})
$$
Similarly, using (\ref{eq10n}) and (\ref{re1}),
$$
\nabla w \cdot \nabla U=\rho (u\re f'(h) -v\im f'(h))=\rho\re hf'(h),
$$
Therefore (\ref{re1}) and (\ref{nablau}) yield
\begin{equation*}
\begin{split}
\nabla  |\nabla U|^2\cdot \nabla U&=|f'(h)|^2 \nabla w\cdot \nabla U+w\nabla |f'(h)|^2\cdot \nabla U\\
&=\rho|f'(h)|^2\re hf'(h)+2w^2\re (f''(h)\overline{f'^2(h)}).
\end{split}
\end{equation*}
Similarly one finds
$$
\nabla |\nabla V|^2 \cdot \nabla V=\rho|f'(h)|^2\im hf'(h)-2w^2\im (f''(h)\overline{f'^2(h)}),
$$
thus readily implying
\begin{equation*}
\begin{split}
V\Delta_\infty U-U\Delta_\infty V&=-\frac12\rho|f'(h)|^2\im hf'(h)\overline{f(h)}+w^2\im (f''(h)f(h)\overline{f'^2(h)})\\
&=-\frac12\rho|f'(h)f(h)|^2\im \frac{hf'(h)}{f(h)}+w^2|f'(h)|^4\im \frac{f''(h)f(h)}{f'^2(h)}
\end{split}
\end{equation*}
hence the latter expression vanishes if $f(h)=h^k$, $k\in \R{}$ as desired.
\end{proof}

\section{Orthogonal twin-harmonics in even dimensions}\label{sec:orth}

Let $h(z)$ be a  holomorphic function of $z\in \mathbb{C}^m$. We write $h\in \mathcal{T}^m$ if
\begin{equation}\label{eq13}
\sum_{i,j=1}^m\overline{h_{z_iz_j} }h_{z_i}h_{z_j}=\mu(z) {h(z)}
\end{equation}
for some real-valued function $\mu(z)$ that is regular on $h(z) = 0$. An essence of the introduced class follows from the following observation.

\begin{proposition}\label{pro:51}
If $h\in \mathcal{T}^m$ then $(\re h(z),\im h(z))$ are orthogonal twin-harmonics in $\R{2m}\cong \mathbb{C}^m$.
\end{proposition}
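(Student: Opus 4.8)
The plan is to verify directly the three defining conditions \eqref{eq9}, \eqref{eq10}, \eqref{eq11} for the pair $(u,v)=(\re h,\im h)$ by translating the complex-analytic hypothesis \eqref{eq13} into real gradient identities on $\R{2m}\cong\mathbb{C}^m$. First I would set up the Wirtinger dictionary: writing $z_k=x_k+\I x_{m+k}$, so that $h_{z_k}=\half12(\partial_{x_k}h-\I\partial_{x_{m+k}}h)$, the holomorphy of $h$ (i.e. $h_{\bar z_k}=0$) gives the Cauchy--Riemann relations between $\nabla u$ and $\nabla v$ in $\R{2m}$. From these one reads off immediately that $u,v$ are harmonic, so \eqref{eq9} holds (with both sides zero), and that $|\nabla u|^2=|\nabla v|^2$ and $\nabla u\cdot\nabla v=0$, which is \eqref{eq11}; the common value $w:=|\nabla u|^2$ equals $2\sum_k|h_{z_k}|^2$ up to a fixed positive constant (which I will normalize away, or track carefully).

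The substantive point is \eqref{eq10}, equivalently the form \eqref{eq10n}: one must show $\nabla w\cdot\nabla u=\rho\,u$ and $\nabla w\cdot\nabla v=\rho\,v$ for a common continuous $\rho$, regular where $h=0$. Here the natural move is to combine the two real equations into a single complex one by forming $\nabla w\cdot\nabla u+\I\,\nabla w\cdot\nabla v$ and recognizing, via the Wirtinger calculus, that the real directional derivatives against $\nabla u$ and $\nabla v$ assemble into the holomorphic derivative: for a real function $F$, the combination $\partial_{x_k}F+\I\partial_{x_{m+k}}F$ pairs with $h_{z_k}$ to produce $\sum_k \bar h_{z_k}\,(\text{something})$. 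Concretely I expect that $\nabla w\cdot\nabla u - \I\,\nabla w\cdot\nabla v$ works out to a constant multiple of $\sum_k h_{z_k}\partial_{z_k}w$, and that $\partial_{z_k}w = \text{const}\cdot\sum_j \overline{h_{z_jz_k}}\,h_{z_j}$ by holomorphy of $h$ (differentiating $w=2\sum_j h_{z_j}\bar h_{z_j}$ in $z_k$ kills the antiholomorphic factor's $z_k$-derivative). Plugging in then gives $\sum_{k}h_{z_k}\partial_{z_k}w = \text{const}\cdot\sum_{j,k}\overline{h_{z_jz_k}}h_{z_j}h_{z_k} = \text{const}\cdot\mu(z)\,h(z)$ by the hypothesis \eqref{eq13}. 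Taking real and imaginary parts of $\text{const}\cdot\mu(z)h(z) = \text{const}\cdot\mu(z)(u+\I v)$ yields exactly \eqref{eq10n} with $\rho = \text{const}\cdot\mu(z)$, which is real-valued and regular on $h=0$ by assumption.

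The main obstacle — really a bookkeeping obstacle rather than a conceptual one — is getting the Wirtinger bracket pairings and the various factors of $2$ exactly right, and in particular verifying the identity $\nabla w\cdot\nabla u \mp \I\,\nabla w\cdot\nabla v = c\sum_k h_{z_k}\partial_{z_k}w$ (for the appropriate sign and constant $c$). The cleanest way to handle this is to observe that for any real $C^1$ function $F$ one has the operator identity $\nabla F\cdot\nabla u - \I\,\nabla F\cdot\nabla v = 2\sum_k \overline{h_{z_k}}\cdot(\partial_{z_k}F + \partial_{\bar z_k}F)$ reducing, when $F$ is real, to a manifestly complex-linear expression; applying it with $F=w$ and using $\partial_{\bar z_k}w$ being the conjugate of $\partial_{z_k}w$, one lands on the stated formula. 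Once \eqref{eq10n} is in hand, all of \eqref{eq9}, \eqref{eq10}, \eqref{eq11} are established and the pair $(\re h,\im h)$ is by definition a pair of orthogonal twin-harmonics, completing the proof.
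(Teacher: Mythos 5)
Your proposal is correct and follows essentially the same route as the paper: Cauchy--Riemann gives harmonicity of $u=\re h$, $v=\im h$ together with $|\nabla u|^2=|\nabla v|^2=\sum_k|h_{z_k}|^2$ and $\nabla u\cdot\nabla v=0$, and then the key complex combination $\Delta_\infty u+\I\,\Delta_\infty v=\sum_{p,q}\overline{h_{z_pz_q}}\,h_{z_p}h_{z_q}=\mu(u+\I v)$, which is exactly your $\nabla w\cdot\nabla u+\I\,\nabla w\cdot\nabla v=2\sum_k h_{z_k}\partial_{\bar z_k}w$ with $\partial_{\bar z_k}w=\sum_j\overline{h_{z_jz_k}}\,h_{z_j}$. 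When writing it up, settle the bookkeeping you flagged: $w=\sum_k|h_{z_k}|^2$ with no extra factor of $2$, and the correct pairing for real $F$ is $\nabla F\cdot\nabla u-\I\,\nabla F\cdot\nabla v=2\sum_k\overline{h_{z_k}}\,\partial_{z_k}F$ (your stated identity with $\partial_{z_k}F+\partial_{\bar z_k}F$ collapses to $\partial_{x_k}F$ and drops the $y$-derivatives), while your formula labelled $\partial_{z_k}w$ is in fact $\partial_{\bar z_k}w$.
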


\begin{proof}
Write $h(z)=u(z)+iv(z)$. Then the Cauchy--Riemann equations yield
\begin{equation}\label{ident1}
\begin{split}
u_{x_k} &=\,\,\,\,v_{y_k} =\re  h_{z_k} \\
u_{y_k} &=-v_{x_k} =-\im  h_{z_k}
\end{split}
\end{equation}
and
\begin{equation}\label{ident2}
\begin{split}
u _{x_px_q} &=v _{x_py_q} =-u _{y_py_q} =\re h _{z_pz_q}\\
v _{x_px_q} &=-u _{x_py_q} =-v _{y_py_q} =\im h _{z_pz_q}.
\end{split}
\end{equation}
Using (\ref{ident1}) and (\ref{ident2}) we obtain that $\nabla u\cdot \nabla v=0$,
$$
|\partial_zh|^2:=\sum_{p=1}^m |h_{z_p}|^2=|\nabla u|^2=|\nabla v|^2,
$$
and also
\begin{equation}\label{reh}
\Delta_\infty u+i\Delta_\infty v=\sum_{p,q=1}^{m} \overline{h_{z_pz_q} }h_{z_p}h_{z_q}=\mu (u+iv),
\end{equation}
Hence both $u$ and $v$ satisfy (\ref{eq9})--(\ref{eq11}), the desired conclusion follows.
\end{proof}

Note that by (\ref{ident2}), $u$ and $v$  are harmonic functions, hence one has in fact from (\ref{reh}) and (\ref{eq2}) that
$$
\Delta_1 u=-\mu(z)u, \quad \Delta_1 v=-\mu(z)v,
$$
hence $u$ and $v$ are eigenfunctions of $\Delta_1$ with the same weight. Then (L0) implies

\begin{corollary}\label{cor:im}
If $h\in \mathcal{T}^m$ then the zero-level sets $\Level(\re  h)$ and $\Level(\im h)$ are minimal hypersurfaces in $\R{2m}\cong \mathbb{C}^m$.
\end{corollary}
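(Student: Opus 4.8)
The plan is to deduce Corollary~\ref{cor:im} directly from Proposition~\ref{pro:51} together with the observation (L0) recalled in the introduction. First I would invoke Proposition~\ref{pro:51}: since $h\in\mathcal{T}^m$, the pair $(u,v):=(\re h,\im h)$ forms a set of orthogonal twin-harmonics in $\R{2m}\cong\mathbb{C}^m$, so in particular \eqref{eq9}--\eqref{eq11} hold. Next, I would use the fact, noted immediately before the corollary, that $u$ and $v$ are harmonic (this is \eqref{ident2}), so that $\Delta u=\Delta v=0$. Feeding this into the definition \eqref{eq2} of $\Delta_1$ and combining with \eqref{reh}, which identifies $\Delta_\infty u+i\Delta_\infty v$ with $\mu(u+iv)$, yields the two scalar identities $\Delta_1 u=-\mu(z)\,u$ and $\Delta_1 v=-\mu(z)\,v$.

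These are exactly eigenfunction equations of the form required in (L0), with weight function $\lambda(z)=-\mu(z)$. Here I would point out that $\mu$ is by hypothesis real-valued and regular (non-singular) on the zero set $h(z)=0$, which contains both $\{u=0\}\cap\{v=0\}$; more to the point, $\mu$ is regular on $\{u=0\}$ and on $\{v=0\}$ separately, since on the regular part of either zero set the right-hand side of \eqref{eq13} forces $\mu$ to be continuous there. Thus the hypotheses of (L0) are met for each of $u$ and $v$, and therefore the regular parts of $\Level(\re h)=\{u=0\}$ and $\Level(\im h)=\{v=0\}$ are minimal hypersurfaces in $\R{2m}$.

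The only genuinely delicate point is the regularity of the zero-level sets: (L0) as stated presupposes that $\Level_0(u)$ has a nonempty regular part, i.e.\ that $\nabla u$ does not vanish identically on $\{u=0\}$. Here this is automatic away from the common zero locus of all the $h_{z_p}$, because $|\nabla u|^2=|\partial_z h|^2=|\nabla v|^2$ by Proposition~\ref{pro:51}, so the critical points of $u$ (equivalently of $v$) are precisely the points where $dh=0$, a set of lower dimension for a nonconstant holomorphic $h$. Hence on the complement of this thin set the level sets are genuine smooth hypersurfaces and the mean-curvature computation \eqref{Hdef} applies verbatim. I would close by remarking that, as emphasized in the introduction, the statement is local in nature, so no global properness of $h$ or of the level sets is needed.
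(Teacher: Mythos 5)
Your argument is correct and is essentially the paper's own: harmonicity of $u=\re h$ and $v=\im h$ from \eqref{ident2}, combined with \eqref{reh} and the real-valuedness of $\mu$, gives $\Delta_1 u=-\mu u$ and $\Delta_1 v=-\mu v$, after which (L0) yields minimality of the zero-level sets. Your extra remarks on the regularity of $\mu$ and the non-vanishing of $\nabla u=\nabla v$ off the critical set of $h$ are sound additions but do not change the route.
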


We consider some further examples.

\begin{example}\label{ex:1}
Any linear function obviously satisfies (\ref{eq13}). A less trivial example is that the quadratic form
$$
h(z_1,\ldots,z_m)=z_1^2+\ldots+z_m^2
$$
satisfies (\ref{eq13}) with $\mu= 8$. The corresponding minimal hypersurface is the Clifford cone
$$
\re h(z)=x_1^2+\ldots +x_m^2-y_1^2-\ldots-y_m^2=0.
$$
The conjugate minimal hypersurface is given by
$$
\im h(z)=2(x_1y_1+\ldots +x_my_m)=0,
$$
and these two Clifford cones are congruent, i.e. coincides under an orthogonal transformation of $\R{2m}$. In Section~\ref{sec:Hsiang} below we consider an example of a cubic form satisfying \eqref{eq13}. In that case, the corresponding  conjugate hypersurfaces are no longer congruent.
\end{example}

\begin{example}\label{pro:det}This  example provides an  irreducible homogeneous polynomial solution to (\ref{eq13}) of arbitrary high degree.
Let $Z\in \mathbb{C}^{m^2}$ denote the matrix with entries $z_{ij}$, $1\le i,j\le m$. Then $\det Z$ is irreducible over $\mathbb{C}$ \cite{Lang} and $\det Z\in \mathcal{T}^{m^2}$. Indeed, setting $f(z)=\det (z_{ab})$ we have  by the Jacobi formula
$$
\frac{\partial f}{\partial z_{ij}}=f z^{ji},
$$
where $z^{ij}$ denotes the $(i,j)$-entry of the inverse matrix $Z^{-1}$. We have
\begin{align*}
\frac{\partial^2 f}{\partial z_{ij}\partial z_{kl}}&=f z^{lk}z^{ji}-f\sum_{i',j'=1}^m z^{jj'}\frac{\partial z_{j'i'}}{\partial z_{kl}}z^{i'i}=f(z^{ji}z^{lk}-z^{jk}z^{li}).
\end{align*}
therefore
\begin{align*}
\sum_{i,j,k,l=1}^m\overline{f_{z_{ij}z_{kl}} }f_{z_{ij}}f_{z_{kl}}&=
f|f|^2\sum_{i,j,k,l=1}^m  z^{ji}z^{lk}(\overline{z^{ji}z^{lk}-z^{jk}z^{li}})\\
&=\frac12 f|f|^2\sum_{i,j,k,l=1}^m  (z^{ji}z^{lk}-z^{jk}z^{li})(\overline{z^{ji}z^{lk}-z^{jk}z^{li}})=f\mu
\end{align*}
which proves (\ref{eq13}) with a real-valued function $$
\mu=-\half12  |f|^2\sum_{i,j,k,l=1}^m|z^{ji}z^{lk}-z^{jk}z^{li}|^2\equiv -\half12\trace (D^2f \cdot \overline{D^2f})
$$
and the desired property follows.
\end{example}

\begin{remark}
A similar proof applies to the  Pfaffian of a generic skew-symmetric matrix considered in \cite{HopLT}.
\end{remark}

Let us consider properties of the class $ \mathcal{T}^m$ in more detail. Combining Proposition~\ref{pro:tan} with  Proposition~\ref{pro:51} yields

\begin{corollary}
\label{cor:rhol}
If $h(z)\in \mathcal{T}^m$ then $\arg h(z)$ is a perfectly harmonic function in $\R{2m}\cong \mathbb{C}^m$.
\end{corollary}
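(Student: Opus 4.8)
The plan is to chain together two results already established in the excerpt. By Proposition~\ref{pro:51}, the hypothesis $h\in\mathcal{T}^m$ means precisely that the pair $(u,v):=(\re h,\im h)$ forms a set of orthogonal twin-harmonics on $\R{2m}\cong\mathbb{C}^m$, i.e.\ equations \eqref{eq9}--\eqref{eq11} hold with a common weight. Then Proposition~\ref{pro:tan}, applied to this pair, says that $\arctan(v/u)$ is perfectly harmonic wherever it is defined. Since $\arg h(z)=\arctan(\im h/\re h)$ on the branch where $\re h>0$, this already gives the conclusion locally on that half-plane's preimage.

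The remaining point is to upgrade from $\arctan(v/u)$ to $\arg h$ on all of $\{h\ne0\}$, not merely where $\re h>0$. First I would note that the property (PH) in \eqref{eq4} is purely local and is invariant under adding a constant and under the symmetry $z\mapsto \text{(constant phase rotation)}\cdot z$: if $h\in\mathcal{T}^m$, then so is $e^{i\theta}h$ for any fixed real $\theta$ — indeed multiplying $h$ by a unimodular constant leaves \eqref{eq13} unchanged (the weight $\mu$ is unaffected since $h$ appears linearly on the right and the second-derivative/first-derivative combination on the left picks up $e^{-i\theta}e^{i\theta}e^{i\theta}=e^{i\theta}$, matching the right side). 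Covering $\{h\ne0\}$ by the four overlapping sectors $\{\re(e^{i\theta}h)>0\}$, $\theta\in\{0,\pi/2,\pi,3\pi/2\}$, on each sector $\arg h$ differs from $\arg(e^{i\theta}h)=\arctan(\im(e^{i\theta}h)/\re(e^{i\theta}h))$ by the additive constant $-\theta$, hence is perfectly harmonic there by Propositions~\ref{pro:51} and~\ref{pro:tan}. Since these sectors cover $\mathbb{C}^m\setminus\{h=0\}$ and (PH) is a local condition, $\arg h$ is perfectly harmonic on the whole of $\{h\ne0\}$.

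The only genuine subtlety — and the step I would treat most carefully — is that $\arg h$ is a multivalued function, so the assertion must be read as: every local single-valued branch of $\arg h$ is perfectly harmonic; equivalently, the (globally well-defined, single-valued) gradient one-form $d(\arg h)$ arises from a perfectly harmonic local potential. This is automatic once the sector argument above is in place, because on each overlap two competing branches differ by a locally constant integer multiple of $2\pi$, which does not affect $\Delta$ or $\Delta_\infty$. Alternatively, and perhaps more cleanly for the write-up, one can simply invoke the composition remark preceding Proposition~\ref{pro:delta}: since $\arctan(v/u)$ is perfectly harmonic and $\arg h$ is obtained from it by composition with a locally affine change (a branch choice), perfect harmonicity is preserved. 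I would present the sector-covering version, since it makes the phase-rotation invariance of $\mathcal{T}^m$ explicit and keeps everything within the local framework emphasized in the introduction.

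\begin{proof}
By Proposition~\ref{pro:51}, since $h\in\mathcal{T}^m$, the pair $(u,v):=(\re h,\im h)$ consists of orthogonal twin-harmonics in $\R{2m}\cong\mathbb{C}^m$. Hence by Proposition~\ref{pro:tan} the function $\arctan(v/u)$ is perfectly harmonic wherever $u\ne0$, which is exactly the branch of $\arg h$ on the sector $\{\re h>0\}$. For a unimodular constant $e^{i\theta}$ the function $e^{i\theta}h$ again lies in $\mathcal{T}^m$ (the identity \eqref{eq13} is unchanged, with the same weight $\mu$, because $h$ occurs linearly on the right and the left-hand combination $\overline{h_{z_iz_j}}h_{z_i}h_{z_j}$ scales by $\overline{e^{i\theta}}\,e^{i\theta}e^{i\theta}=e^{i\theta}$). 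Applying the previous step to $e^{i\theta}h$ for $\theta\in\{0,\tfrac\pi2,\pi,\tfrac{3\pi}2\}$, we conclude that on each sector $\{\re(e^{i\theta}h)>0\}$ the function $\arg h$, which there equals $\arctan\bigl(\im(e^{i\theta}h)/\re(e^{i\theta}h)\bigr)-\theta$, is perfectly harmonic. These four sectors cover $\mathbb{C}^m\setminus\{h=0\}$, and since the defining condition \eqref{eq4} is local and insensitive to additive (locally) constant shifts, every local single-valued branch of $\arg h$ is perfectly harmonic on $\{h\ne0\}$.
\end{proof}
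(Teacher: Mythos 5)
Your proof is correct and takes essentially the same route as the paper, whose entire argument is precisely the combination of Proposition~\ref{pro:51} with Proposition~\ref{pro:tan}. Your sector-covering step handling the branches of $\arg h$ (using that $e^{i\theta}h\in\mathcal{T}^m$, which also follows from Proposition~\ref{pro:4}(i) with $c=e^{i\theta}$, $r=1$) is a careful elaboration of a point the paper leaves implicit, not a different method.
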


The next proposition shows that $\mathcal{T}^{m}$ has nice multiplicative properties.

\begin{proposition}\label{pro:4}
Let $h(z)\in \mathcal{T}^{m}$ and $g(w)\in \mathcal{T}^{n}$. Then
\begin{itemize}
\item[(i)]
$c\,h(z)^r\in \mathcal{T}^{m}$ for any $c\in \mathbb{C}$ and $r\in \R{}$;

\item[(ii)]
$h(z)g(w)\in \mathcal{T}^{m+n}$.

\item[(iii)]
$h(z)/g(w)\in \mathcal{T}^{m+n}$.
\end{itemize}
\end{proposition}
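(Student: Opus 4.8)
The plan is to prove the three multiplicative closure properties of $\mathcal{T}^m$ directly from the defining relation \eqref{eq13}, by treating the left-hand side of \eqref{eq13} as a Hermitian-type trilinear expression in the holomorphic derivatives and tracking how it transforms under each operation. A clean device is to write, for a holomorphic $F$ of any number of complex variables, the quantity $Q[F]:=\sum_{i,j}\overline{F_{\zeta_i\zeta_j}}F_{\zeta_i}F_{\zeta_j}$, so that $F\in\mathcal{T}$ precisely when $Q[F]=\mu F$ with $\mu$ real and regular on $\{F=0\}$. All three claims then reduce to computing $Q$ of a composed/product function.

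For (i), I would apply the chain rule to $F=c\,h^r$: writing $F_{z_i}=cr\,h^{r-1}h_{z_i}$ and $F_{z_iz_j}=cr\,h^{r-1}h_{z_iz_j}+cr(r-1)h^{r-2}h_{z_i}h_{z_j}$, substitute into $Q[F]$ and collect terms. One term reproduces $h^{r-1}\overline{h^{r-1}}\cdot Q[h]=|c|^2r^2|h|^{2(r-1)}\overline{h^{r-1}}\mu h$, which is a real multiple of $F=c h^r$ once one checks $|c|^2 r^2 |h|^{2(r-1)}\,\overline{h^{r-1}}/(c h^{r})\cdot (c h^r)$ bookkeeping — more simply, every surviving term carries an overall factor $h^{r-1}\overline{h^{\,r-1}}$ times something, and pulling out $h$ leaves a real-valued weight; the cross term involving $h^{r-2}h_{z_i}h_{z_j}$ pairs with $\sum h_{z_i}h_{z_j}\overline{h_{z_i}h_{z_j}}=|\partial_z h|^4\ge 0$, which is manifestly real, and the pure $r-1$ piece gives $r^2|c|^2|h|^{2r-2}\mu\cdot h^{?}$ — the key point is only that after factoring one power of $F$ what remains is real, and regularity of the new weight on $\{h=0\}$ follows because $h^{r-1}\overline{h}^{\,r-1}/h^{r-1}=\overline h^{\,r-1}$ stays bounded there (using $r\ge 1$; for general real $r$ one restricts to the regular part of the level set where $h\ne 0$, consistent with the paper's local, "wherever well-defined" conventions). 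This is essentially the computation already carried out in the proof of Proposition~\ref{pro:tan1} with $f(h)=h^k$, transplanted to the $\mathcal{T}^m$ setting via Proposition~\ref{pro:51}.

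For (ii), set $P(z,w)=h(z)g(w)$ on $\mathbb{C}^{m+n}$. Because the $z$- and $w$-variables are disjoint, the mixed second derivatives factor: $P_{z_iz_j}=g\,h_{z_iz_j}$, $P_{w_kw_l}=h\,g_{w_kw_l}$, $P_{z_iw_k}=h_{z_i}g_{w_k}$. Plugging into $Q[P]=\sum_{\alpha,\beta}\overline{P_{\zeta_\alpha\zeta_\beta}}P_{\zeta_\alpha}P_{\zeta_\beta}$ and splitting the double sum into the $(z,z)$, $(w,w)$, $(z,w)$ and $(w,z)$ blocks, the $(z,z)$ block yields $\overline{g}\,|g|^2\cdot\, Q[h]\cdot$(a power of $h$) $=\mu_h(z)\,|g|^2\,\overline{g}\,g\,h=\mu_h|g|^2\cdot P\cdot(g\bar g/g\cdot\ldots)$ — again the honest statement is that this block equals $|g(w)|^2\mu_h(z)\,P(z,w)$; symmetrically the $(w,w)$ block equals $|h(z)|^2\mu_g(w)\,P$; and the two mixed blocks combine to $2\,\overline{(h_{z_i}g_{w_k})}\,(h_{z_i}g_{w_k})$ summed, i.e. $2|\partial_z h|^2|\partial_w g|^2\cdot(\text{something})$ — more precisely the mixed contribution is $2\sum_{i,k}\overline{h_{z_i}g_{w_k}}\,h_{z_i}g_{w_k}\cdot(\text{factor})$. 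Carrying the factors honestly, one finds $Q[P]=\bigl(|g|^2\mu_h+|h|^2\mu_g+2|\partial_z h|^2|\partial_w g|^2/(h g)\cdot hg\bigr)\cdot$(correction) — the cleanest route is to verify that $Q[P]/P$ is real by inspecting each block after dividing by $P=hg$: the $(z,z)$ block gives $|g|^2\mu_h$ (real), the $(w,w)$ block gives $|h|^2\mu_g$ (real), and the mixed blocks give $2|\partial_z h|^2|\partial_w g|^2$ (real and nonnegative), and regularity on $\{P=0\}=\{h=0\}\cup\{g=0\}$ holds because each summand is regular there. Then (iii) follows immediately from (i) with $r=-1$ applied to $g$, combined with (ii): $h/g=h\cdot g^{-1}\in\mathcal{T}^{m+n}$, valid wherever $g\ne 0$.

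The main obstacle is purely bookkeeping: keeping the holomorphic factors ($h$, $g$, and their powers) separate from the anti-holomorphic ones that arise from the overline on the second-derivative factor, so that at the end exactly one holomorphic power of the product can be extracted to exhibit the eigenfunction form, with everything left over being a sum of terms of the shape $|\,\cdot\,|^2\cdot(\text{real weight})$ or $|\partial(\cdot)|^2|\partial(\cdot)|^2$, hence real. Once the three identities
\[
Q[c\,h^r]=|c|^2 r^2|h|^{2(r-1)}\Bigl(\mu_h+(r-1)\tfrac{|\partial_z h|^4}{|h|^2}\bar h\Bigr)\,h,\qquad
Q[hg]=\bigl(|g|^2\mu_h+|h|^2\mu_g+2|\partial_z h|^2|\partial_w g|^2\bar h\bar g\bigr)hg
\]
are written down (with the understanding that the bracketed weights are to be read as the actual — manifestly real — quotients $Q[\cdot]/(\cdot)$), the verification that the weights are real and regular is routine, and (i)--(iii) drop out in order. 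I would present (i) and (ii) as the two genuine computations and deduce (iii) in one line.
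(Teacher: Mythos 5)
Your proposal is correct in substance and follows essentially the same route as the paper: compute the quantity $Q[\cdot]$ by the chain rule for (i), split the double sum for $Q[hg]$ into the $(z,z)$, $(w,w)$ and mixed blocks for (ii) (giving exactly $|g|^2\mu_h$, $|h|^2\mu_g$ and $2|D_zh|^2|D_wg|^2$, as in the paper), and obtain (iii) by combining (i) with $r=-1$, $c=1$ and (ii). The only defects are in your two displayed ``final identities'', which are wrong as literally written: the correct weight in (i) is $|c|^2r^3\bigl(\mu|h|^{2r-2}+(r-1)|h|^{2r-4}|D_zh|^4\bigr)$ --- the prefactor is $r^3$, not $r^2$, since $Q$ is cubic in the derivatives, and there is no stray $\bar h$ --- and in (ii) the mixed-block contribution is $2|D_zh|^2|D_wg|^2$ with no $\bar h\bar g$ factor; with that factor the weight would not even be real, defeating the purpose of the computation. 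Because your prose derivation of the block contributions is correct and you explicitly disclaim the displays as shorthand for the true quotients $Q[\cdot]/(\cdot)$, this is bookkeeping rather than a genuine gap, but those displays should be corrected before the argument is written up.
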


\begin{proof}
Setting $H(z):=h(z)^r$ one has
$$
\sum_{i,j=1}^{m} \overline{H_{z_iz_j} }H_{z_i}H_{z_j}=
r^3((r-1) |h|^{2r-4}|D_zh|^4+\mu |h|^{2r-2})h^r=\mu_1 H,
$$
where $D_z(h)=(\frac{\partial h}{\partial z_1},\ldots, \frac{\partial h}{\partial z_m})$. Then  $\mu_1=r^3\sum_{i,j=1}^{m} ((r-1) |h|^{2r-4}|D_zh|^4+\mu |h|^{2r-2})$ is obviously a real-valued function, thus implying $h^r\in \mathcal{T}^{m}$. Similarly one justifies $ch\in \mathcal{T}^{m}$ which yields (i).
Next, we have
$$
\sum_{i,j=1}^{m} \overline{h_{z_iz_j} }h_{z_i}h_{z_j}=\mu h, \qquad
\sum_{\alpha,\beta=1}^{n} \overline{g_{w_{\alpha}w_{\beta}} }g_{w_\alpha}g_{w_\beta}=\nu g,
$$
where $\mu=\mu(z)$ and $\nu=\nu(w)$ are real-valued functions. Therefore, setting $H(z,w):=h(z)g(w)$ one obtains
\begin{equation*}
\begin{split}
\sum_{i,j=1}^{m+n} \overline{H_{z_iz_j} }H_{z_i}H_{z_j}=&
|g|^2g\sum_{i,j=1}^{m} \overline{h_{z_iz_j} }h_{z_i}h_{z_j}+
|h|^2h\sum_{\alpha,\beta=1}^{n} \overline{g_{w_{\alpha}w_{\beta}} }g_{w_\alpha}g_{w_\beta}\\
&+2|D_zh|^2|D_w g|^2\,hg=(\mu |g|^2+\nu|h|^2+2|D_zh|^2|D_w g|^2)H,
\end{split}
\end{equation*}
implying (ii). Finally, setting $r=-1$ and $c=1$ in (i) implies that $1/h(z)\in \mathcal{T}^n$, thus together with (ii) implies (iii).
\end{proof}

We finish this section by demonstrating some particular solutions to \eqref{eq13}. We start with the complete characterization of class $\mathcal{T}^{1}$. It would be interesting to obtain a similar characterization for $\mathcal{T}^{m}$, $m\ge2$.

\begin{proposition}
\label{pro:m}
Any element of $\mathcal{T}^1$ is either a binomial $h(z)=(az+b)^p$ or the exponential $h(z)=e^{pz}$, where  $a,b\in \mathbb{C}$ and $p\in \R{}$.
\end{proposition}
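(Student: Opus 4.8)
The plan is to turn the defining identity \eqref{eq13} for $m=1$ — namely $\overline{h''(z)}\,h'(z)^2=\mu(z)\,h(z)$ with $\mu$ real-valued — into an autonomous second-order ODE for $h$ and then integrate it. First I would discard the trivial cases $h\equiv 0$ and $h\equiv\mathrm{const}$ (a nonzero constant is a degenerate binomial with $a=0$, and a degenerate exponential with $p=0$) and assume $h$ is non-constant and holomorphic on a connected domain $D\subset\mathbb{C}$; then $h$ and $h'$ have only isolated zeros, so $D_0:=D\setminus\{h'=0\}$ is open, connected and dense in $D$. On the open subset of $D_0$ where in addition $h\ne0$ one may solve $\mu=\overline{h''}\,(h')^2/h$, and there the requirement $\mu=\bar\mu$, after clearing the (nonvanishing) denominators, is equivalent to
\[
\overline{\Psi}=\Psi,\qquad\text{where}\quad \Psi(z):=\frac{h(z)\,h''(z)}{h'(z)^2}.
\]

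Next I would observe that $\Psi$ is holomorphic on all of $D_0$ and, by the displayed relation, real-valued on a nonempty open subset of it; since (as $D_0$ is connected) a holomorphic function that is real-valued on a nonempty open set is constant, $\Psi\equiv c$ for some $c\in\mathbb{R}$. Because $h\,h''$ and $(h')^2$ are holomorphic throughout $D$, the identity theorem then promotes $hh''=c(h')^2$ from the dense set $D_0$ to all of $D$. So the problem is reduced to integrating the ODE $h(z)\,h''(z)=c\,h'(z)^2$ with $c$ a real constant.

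To integrate it I would set $g:=h'/h=(\log h)'$, a meromorphic function on $D$; differentiating and substituting the ODE gives $g'=h''/h-(h'/h)^2=(c-1)g^2$. If $c=1$, then $g$ is a constant $p$ and $h=c_0e^{pz}$, which up to a translation of $z$ (which preserves $\mathcal{T}^1$, as does multiplication by a constant, by Proposition~\ref{pro:4}) is the exponential $e^{pz}$. If $c\ne1$, then $(1/g)'=1-c$ is constant, so $1/g=(1-c)(z-z_0)$ for some $z_0$, and integrating once more yields $h=c_0\,(z-z_0)^{1/(1-c)}$; absorbing the constant $c_0$ into the base (Proposition~\ref{pro:4}(i)) this is a binomial $(az+b)^p$ with $p=1/(1-c)$. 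The reality of $c$ is exactly what forces $p$ to be real in this second case.

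The one point that will require genuine care is the passage ``$\mu$ real $\Rightarrow$ $\Psi$ a real constant'': I need the algebraic manipulation with $\mu$ to take place on an open set, and the larger domain $D_0$ carrying $\Psi$ to be connected, so that the local identity $\Psi=\bar\Psi$ really forces $\Psi$ to be globally constant — this works because the zeros of $h$ and of $h'$ are isolated points of $\mathbb{C}$ — and then analytic continuation carries the resulting ODE across those removed zeros. Everything after that is routine integration of a first-order equation for $g$.
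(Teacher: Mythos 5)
Your argument is correct and follows essentially the same route as the paper: from $\overline{h''}(h')^2=\mu h$ with $\mu$ real you extract that the meromorphic quantity $hh''/(h')^2$ is real-valued, hence equal to a real constant $c$, and then integrate $hh''=c(h')^2$. You are somewhat more careful about the domain (isolated zeros of $h$ and $h'$, identity theorem) and more explicit in the integration via $g=h'/h$; note only that, exactly as in the paper's own proof, the case $c=1$ yields $h=c_0e^{pz}$ with an a priori \emph{complex} constant $p$ (and indeed $e^{pz}\in\mathcal{T}^1$ for every complex $p$), so the reality of $p$ in the exponential case should be understood up to a rotation of the variable rather than as a consequence of the reality of $c$.
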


\begin{proof}
Indeed, let $\Omega$ be the domain of holomoprhy of $h(z)$. Then (\ref{eq13}) yields
$$
\frac{|h'' (z)|^2}{\mu(z)}=\frac{h'' (z)h(z)}{h'^2(z)},
$$
where the right-hand side is a meromorphic function in $\Omega$, while the left-hand side is real valued in $\Omega$. Thus, the both sides are  constant in $\Omega$, say equal to $c\in \R{}$. This yields $ch'^2(z)=h''(z)h(z)$, or $h'(z)=ch(z)^b$ for some real $b$. This  yields the required conclusions.
\end{proof}

Composing the above solutions with Proposition~\ref{pro:4} yields some more  examples.

\begin{example}
We demonstrate how the above facts apply to construct  minimal hypersurfaces in \textit{odd-dimensional} ambient spaces.
Let $h(z)\in \mathcal{T}^m$. Then
\begin{equation}\label{arg1}
x_{2m+1}=\arg h(z)
\end{equation}
is a minimal hypersurface in $\R{2m+1}$.
Indeed, the function $g(z_1,\ldots,z_m,z_{m+1}):=ie^{z_{m+1}}h(z_1,\ldots,z_m)$ is $\R{}$-holomorphic by Proposition~\ref{pro:4} and Proposition~\ref{pro:m}. Then
$$
\re g=-e^{\re z_{m+1}}(\re h\sin\im  z_{m+1}+\im h\cos\im  z_{m+1})
$$
yields that $\re g=0$ is equivalently defined by
$$
  \im z_{m+1}=-\arctan \frac{\im h}{\re h}=-\arg h+C
$$
for some real constant $C$. It is easily seen that the latter equation is equivalent to  (\ref{arg1}) up to an orthogonal transformation (a reflection) of $\R{2m+1}$.
\end{example}

\begin{example}
Setting $h(z_1)=z_1=x_1+i x_2$,  (\ref{arg1}) becomes $x_3=\arctan x_2/x_1$, i.e. the classical helicoid. More generally, one has the following minimal hypersurface:
$$
x_{2m+1}=\arg (z_1^{k_1}...z_m^{k_m}), \qquad k_i\in \mathbb{Z}.
$$
\end{example}

Combining Example~\ref{ex:1} and Proposition~\ref{pro:4} yields.
\begin{corollary}
\label{cor:Lawson}
Let natural numbers $p_i$, $1\le i\le m$, be subject to the GCD condition $(p_1,\ldots,p_m)=1$ and let $c\in \mathbb{C}^\times$. Then the hypersurface $$
\re (cz_1^{p_1}\cdots z_m^{p_m})=0,
$$
  is minimal (in general singularly) immersed cone in $\R{2n}\cong \mathbb{C}^m$.
\end{corollary}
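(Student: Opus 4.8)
The plan is to deduce Corollary~\ref{cor:Lawson} directly from the machinery already assembled, without any fresh PDE computation. First I would observe that each coordinate function $z_i$ is obviously in $\mathcal{T}^1$: it is linear, so $\partial^2 z_i/\partial z_i^2 = 0$ and \eqref{eq13} holds trivially with $\mu \equiv 0$ (this is also the $a=1, b=0, p=1$ case of Proposition~\ref{pro:m}, and is noted as a linear example in Example~\ref{ex:1}). Next, by Proposition~\ref{pro:4}(i) applied with $r = p_i \in \R{}$, the power $z_i^{p_i}$ lies in $\mathcal{T}^1$, viewed as a holomorphic function of the single variable $z_i$. Then iterating Proposition~\ref{pro:4}(ii), the product $z_1^{p_1}\cdots z_m^{p_m}$ lies in $\mathcal{T}^m$ (each factor living in its own copy of $\mathbb{C}$, so the variable sets are disjoint and the induction on $m$ goes through cleanly). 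Multiplying by the constant $c \in \mathbb{C}^\times$ is again covered by Proposition~\ref{pro:4}(i) with $r=1$, so $h(z) := c z_1^{p_1}\cdots z_m^{p_m} \in \mathcal{T}^m$.

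Having placed $h$ in $\mathcal{T}^m$, I would invoke Corollary~\ref{cor:im}: for any $h \in \mathcal{T}^m$ the zero-level set $\Level(\re h)$ is a minimal hypersurface in $\R{2m} \cong \mathbb{C}^m$. Applying this to our $h$ gives precisely that $\re(c z_1^{p_1}\cdots z_m^{p_m}) = 0$ is minimal, which is the assertion. (I would also remark, following the text's convention after Example~\ref{pro:det} and Corollary~\ref{cor:rhol}, that $\arg h = \arg(c z_1^{p_1}\cdots z_m^{p_m})$ is then perfectly harmonic, reconnecting with the preceding helicoid-type examples.)

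Two small points still need attention. The first is the cone/homogeneity claim: since the $p_i$ are natural numbers, $h$ is a genuine homogeneous polynomial of degree $p_1 + \cdots + p_m$, so its zero set is a cone through the origin in $\R{2m}$; the words ``in general singularly immersed'' acknowledge that the regular part (where $\nabla \re h \neq 0$) is the actual minimal hypersurface, the origin and lower-dimensional strata being excluded — this is exactly the regularity caveat already built into the definition of eigenfunction and into (L0). The second is the role of the GCD condition $(p_1,\ldots,p_m) = 1$: it is not needed for minimality (which holds for any natural $p_i$) but ensures the polynomial is not a proper power, i.e. that one is describing a genuinely ``new'' cone rather than a reparametrization of a lower-degree one; I would phrase this as a normalization remark rather than a step in the proof.

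I do not expect a serious obstacle here — the corollary is essentially a bookkeeping consequence of Proposition~\ref{pro:4} and Corollary~\ref{cor:im}. The only thing to be careful about is the induction in step one: Proposition~\ref{pro:4}(ii) is stated for $h \in \mathcal{T}^m$, $g \in \mathcal{T}^n$ on independent variable blocks, so one must present the product $z_1^{p_1}\cdots z_m^{p_m}$ as built up one variable at a time, $\mathcal{T}^1 \otimes \mathcal{T}^1 \to \mathcal{T}^2 \to \cdots \to \mathcal{T}^m$, rather than all at once; this is routine but worth stating explicitly so the reader sees the disjointness of the variable groups is what makes it work.
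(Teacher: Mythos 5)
Your argument is exactly the paper's intended derivation: the paper states the corollary simply as ``Combining Example~\ref{ex:1} and Proposition~\ref{pro:4} yields,'' i.e. linear functions lie in $\mathcal{T}^1$, Proposition~\ref{pro:4}(i)--(ii) build $c\,z_1^{p_1}\cdots z_m^{p_m}\in\mathcal{T}^m$ over disjoint variable blocks, and Corollary~\ref{cor:im} gives minimality of the zero set of the real part. Your fleshing-out of the iteration, the homogeneity/cone remark, and the observation that the GCD condition is a normalization rather than a hypothesis needed for minimality are all consistent with the paper.
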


\begin{example}\label{ex:3}
For $c=1$, Corollary~\ref{cor:Lawson} yields exactly the observation made earlier by H.B.~Lawson~\cite[p.~352]{Lawson}. For instance, when $m=2$ one obtains the well-known infinite family of immersed algebraic minimal Lawson's hypercones $\re (z_1^pz_2^q)=0$, $(p,q)=1$, in $\R{4}$. The intersection of such a cone with the unit sphere $S^3$ is an immersed minimal surface of Euler characteristic zero of $S^3$ \cite{Lawson}.
\end{example}

\begin{example}\label{ex:4}
Using $c=\sqrt{-1}$ in  Corollary~\ref{cor:Lawson} yields minimal hypersurfaces in $\R{2m}$ of the following kind:
$$
\sum_{i=1}^m p_i\arctan \frac{y_k}{x_k}=0
$$
which obviously is an algebraic minimal cone in $\R{2m}$.
\end{example}

\section{Perfectly harmonic functions via Hsiang eigencubics}\label{sec:Hsiang}

A cubic form $u(x)$ on $\R{N}$ is called a \textit{Hsiang eigencubic} (or \textit{radial eigencubic} according to \cite{Tk10c}, \cite{NTVbook}) if
\begin{equation}\label{Hs1}
\Delta_1 f=\lambda |x|^2 f,\qquad x\in \R{N}
\end{equation}
for some $\lambda \in \R{}$. Here $|x|^2=x_1^2+\ldots+x_N^2$. According to the definition of Section~\ref{sec:eigen}, $f$ is an eigenfunction of $\Delta_1$. In this section we construct  Hsiang eigencubics $f$ which are also in $\mathcal{T}^N$. Then it follows by Proposition~\ref{pro:51} that each such eigencubic $f$ gives rise to orthogonal twin-harmonics.

The simplest example of such Hsiang eigencubic  is given explicitly by
\begin{equation}\label{F0}
f_0(x)=x_1x_2x_3,
\end{equation}
the verification that $f_0\in \mathcal{T}^3$ is straightforward. The further examples treated here require us to evolve some  ideas developed in \cite{Tk14} and \cite{NTVbook}. First, in order to
derive the differential relations on $f$, we need to view the
ambient spaces $\R{N}$ as the traceless subspaces of certain Jordan algebras. This viewpoint, developed below, unifies the triplet  of Hsiang eigencubics, and brings out the rich geometric and algebraic
structure embodied there (we just indicate some elementary observations).

\begin{remark}\label{rem:Hs}
Though, there are infinitely many non-congruent cubic homogeneous solutions of \eqref{Hs1},  only the four examples of Hsiang eigencubics constructed below belong to $\mathcal{T}^N$. More precisely, one can prove that the latter property holds if and only if the Peirce dimension $n_1=0$, see \cite{NTVbook}. We do not give any details of the corresponding proofs, as they require a more deep treatment of Jordan algebras and the Hsiang cubic cones theory.
\end{remark}

In order to construct examples, we begin by describing the Jordan algebra viewpoint mentioned above,
and then using this viewpoint to derive the corresponding differential relations. The standard references here are \cite{McCrbook}, \cite{FKbook}.  Let $W=\mathfrak{h}_r(\mathbb{A}_d)$ denote the algebra over the reals on the vector space of all Hermitian matrices of size $r$ over the Hurwitz division algebra $\F_d$, $d\in \{1,2,4,8\}$ (the reals $\F_1=\R{}$, the complexes $\F_2=\mathbb{C}$, the Hamilton quaternions $\F_4=\mathbb{H}$ and the Graves--Cayley octonions $\F_8=\mathbb{O}$) with the multiplication
$$
x\bullet y=\half12(xy+yx),
$$
where $xy$ is the standard matrix multiplication. Then it is classically known that if $r\le 3$ and $d\in \{1,2,4,8\}$, or if $r\ge 4$ and $d\in \{1,2,4\}$ then $W$ is a \textit{simple} Jordan algebra with the unit matrix $e$ being the algebra unit. It is easy to see that
$$
N+1:=\dim_{\R{}} W=r+\frac{r(r-1)}{2}d.
$$
The algebra $W$ is commutative and for $r\ge 2$  is nonassociative. Still,  the algebra $W$ is \textit{power associative}, i.e. the subalgebra generated by any element is always associative. In particular, the multiplicative powers  $x^{\bullet n}$ do not depend on associations:  $x^{\bullet 2}=x\bullet x$, $x^{\bullet 3}=x\bullet x^{\bullet2}$, $x^{\bullet 2}\bullet x^{\bullet 2}=x^{\bullet 3}\bullet x$, etc. This readily yields that for any  $x\in W$ the Hamilton--Cayley identity holds:
\begin{equation}\label{HamCayley}
x^{\bullet N}-\sigma_1(x)x^{\bullet (N-1)}+\sigma_2(x)x^{\bullet (N-2)}+\ldots+(-1)^{N} \sigma_{N}(x)e=0,
\end{equation}
where the coefficients $\sigma_k(x)$ are real-valued homogeneous degree $k$ functions of $x$.
The first coefficient $\sigma_1(x)$ is called the linear trace form and it is \textit{associative}, i.e.
\begin{equation}\label{asso}
\sigma_1((x\bullet  y)\bullet z)=\sigma_1(x\bullet (y\bullet  z)), \qquad \forall x,y,z\in W,
\end{equation}
i.e. $\sigma_1(x\bullet  y\bullet z)$ is well-defined without
parentheses.
The higher degree forms $\sigma_i(x)$ are recovered from $\sigma_1$  by virtue of the Newton identities:
\begin{equation}\label{Newton}
\begin{split}
\sigma_2(x)&=\half12(\sigma_1(x)^2-\sigma_1(x^{\bullet 2})),\\
\sigma_3(x)&=\half16(\sigma^3_1(x)-3\sigma_1(x^{\bullet 2})\sigma_1(x)+2\sigma_1(x^{\bullet 3})),\\
\sigma_4(x)&=\half1{24}(\sigma_1(x)^4- 6\sigma_1(x^{\bullet 2})\sigma_1(x)^2+3\sigma^2_1(x^{\bullet 2})+8\sigma_1(x)\sigma_1(x^{\bullet 3})- 6\sigma_1(x^{\bullet 4})),\\
\end{split}
\end{equation}

In the remained part of the section we always assume that $r=4$ and consider a subspace of consisting of trace-free elements:
$$
V=\{x\in \mathfrak{h}_4(\mathbb{A}_d):\, \sigma_1(x)=0\}, \qquad N=\dim V=3+6d,\qquad d\in \{1,2,4\}.
$$
Then  we find from \eqref{Newton}
\begin{equation}\label{Newton1}
\begin{split}
\sigma_2(x)&=-\half12\sigma_1(x^{\bullet 2}),\quad
\sigma_3(x)=\half13\sigma_1(x^{\bullet 3}),\quad
\sigma_4(x)=\half1{8}(\sigma^2_1(x^{\bullet 2})- 2\sigma_1(x^{\bullet 4})),
\end{split}
\end{equation}
hence  \eqref{HamCayley} yields
\begin{equation}\label{HamCayley1}
x^{\bullet 4}=\half12\sigma_1(x^{\bullet 2})x^{\bullet 2}+\half13\sigma_1(x^3)x-\half1{8}(\sigma^2_1(x^{\bullet 2})- 2\sigma_1(x^{\bullet 4}))e,
\end{equation}
Multiplying \eqref{HamCayley1} by $x$ followed by application of the trace form $\sigma_1$, one obtains
\begin{equation}\label{HamCayley3}
\sigma_1(x^{\bullet 5})=\half56\sigma_1(x^{\bullet 2})\sigma_1(x^{\bullet 3}).\end{equation}

Next, consider the real-valued bilinear form
$$
b(x,y)=\sigma_1(x\bullet y), \qquad x,y\in W.
$$
Since $\bullet$ is commutative, $b(x,y)=b(y,x)$, and $b(x,x)=\sigma_1(x\bullet x)>0$ unless $x=0$ because $x$ is a Hermitian matrix. Thus, $b$ is a positive definite inner product on $W$.  In particular, $W$ is a (simple) Euclidean Jordan algebra, cf. \cite{FKbook} Furthermore, from \eqref{asso}
\begin{equation}\label{asso1}
b(x\bullet y, z)=b(x, y \bullet z), \qquad \forall x,y,z\in W,
\end{equation}
Also, since $\sigma_1(x)=\sigma_1(x\bullet e)=b(x,e),$ we see that $
V$ is the orthogonal complement to $e$ in $W$: $V=e^\bot$. Note that
$
b(e,e)=\sigma_1(e)=\trace e=4,
$
hence $\half12 e$ is the unit vector in $W$. Below we make frequently use the following orthogonal projection formula:
$$
x^V=x-\half14b(x,e)e=x-\half14\sigma_1(x)e: W\to V,
$$
where $x\to x^V:W\to V$ is the orthogonal projection.

Let $K=\R{}$ or $K=\mathbb{C}$. Let $\{e_i\}_{1\le i\le N}$ be an orthonormal (w.r.t. $b$) basis of $V$.  Since $b(e_i\bullet e_j,e_k)\in \R{}$, the function
$$
f(z_1,\ldots, z_{N}):=\half16\sigma_1(z_e^{\bullet 3})=\half16b(z_e^{\bullet2}, z_e), \qquad z_e:=\sum\nolimits_{i=1}^{N}z_ie_i\in V,
$$
is a cubic form of $(z_1,\ldots, z_{N})\in K^{N}$ with real coefficients, i.e. $f\in \R{}[z_1,\ldots,z_{N}]$. In particular, if $K=\mathbb{C}$ then $f$ is a holomorphic function of $z$.

\begin{proposition}\label{pro:FH}
$f$ is a Hsiang eigencubic in $\R{N}$. Furthermore, $f\in \mathcal{T}^{N}$.
\end{proposition}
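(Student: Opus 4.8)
The plan is to pass to the Jordan-algebra picture on the traceless subspace $V$ and to reduce every differential expression to the one-variable trace forms $\sigma_1(z_e^{\bullet k})$, collapsing intermediate terms by the associativity \eqref{asso1} of $\sigma_1$ and then invoking the degree-lowering Hamilton--Cayley relations \eqref{HamCayley1}, \eqref{HamCayley3}. Since $\{e_i\}$ is orthonormal for $b$, the Euclidean gradient and Hessian of $f$ on $V\cong\R{N}$ are the ``free'' Jordan differentials followed by the orthogonal projection $y\mapsto y^V=y-\frac14 b(y,e)e$. Differentiating $f=\frac16\sigma_1(z_e^{\bullet 3})=\frac16 b(z_e^{\bullet 2},z_e)$ and using the associativity of $\sigma_1$, one obtains $f_{z_i}=\frac12 b(z_e^{\bullet 2},e_i)$ and $f_{z_iz_j}=b(z_e,e_i\bullet e_j)$; identifying vectors with elements of $V$ (resp.\ of $V\otimes\mathbb{C}$ in the holomorphic setting), this reads: the gradient equals $\frac12(z_e^{\bullet 2})^V$, and the Hessian acts by $h\mapsto(z_e\bullet h)^V$.

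For the Hsiang eigencubic statement take $x=z_e\in V$ real. Taking the trace of the Hessian and using the standard identity $\trace L_x=\frac{N+1}{4}\,\sigma_1(x)$ for a simple Euclidean Jordan algebra of rank $4$, together with $\sigma_1(x)=0$, gives $\Delta f=0$, so $f$ is harmonic. For the $\infty$-Laplacian write $\Delta_\infty f=b\bigl((x\bullet\nabla f)^V,\nabla f\bigr)$, substitute $\nabla f=\frac12(x^{\bullet 2})^V$, expand the projections, and use repeatedly $b(x,e)=\sigma_1(x)=0$ and the associativity of $\sigma_1$; every term reduces to the scalars $\sigma_1(x^{\bullet 2}),\sigma_1(x^{\bullet 3}),\sigma_1(x^{\bullet 5})$, and one gets
$$
\Delta_\infty f=\frac14\Bigl(\sigma_1(x^{\bullet 5})-\frac12\,\sigma_1(x^{\bullet 2})\,\sigma_1(x^{\bullet 3})\Bigr).
$$
Substituting \eqref{HamCayley3} for $\sigma_1(x^{\bullet 5})$ yields $\Delta_\infty f=\frac1{12}\,\sigma_1(x^{\bullet 2})\,\sigma_1(x^{\bullet 3})=\frac12|x|^2 f$, since $\sigma_1(x^{\bullet 2})=b(x,x)=|x|^2$ and $\sigma_1(x^{\bullet 3})=6f$. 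Hence $\Delta_1 f=|\nabla f|^2\Delta f-\Delta_\infty f=-\frac12|x|^2 f$, so $f$ (which is not identically zero, so its zero set has the nonempty regular part required by the definition) is a Hsiang eigencubic with $\lambda=-\frac12$.

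For $f\in\mathcal{T}^N$ pass to complex $z_e\in V\otimes\mathbb{C}$; note $\sigma_1(\bar z_e)=\overline{\sigma_1(z_e)}=0$ and that $b(\bar z_e,z_e)=|z|^2$ is the (real, positive) Hermitian norm, while \eqref{HamCayley1} persists by polynomial continuation. The formulas above give
$$
\sum_{i,j=1}^{N}\overline{f_{z_iz_j}}\,f_{z_i}f_{z_j}=\frac14\,b\bigl((\bar z_e\bullet g)^V,\,g\bigr),\qquad g:=(z_e^{\bullet 2})^V.
$$
Expanding the two projections and using $\sigma_1(\bar z_e)=0$ together with the associativity of $\sigma_1$ collapses the right-hand side to $\frac14\bigl(b(\bar z_e,z_e^{\bullet 4})-\frac12\,\sigma_1(z_e^{\bullet 2})\,b(\bar z_e,z_e^{\bullet 2})\bigr)$. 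Now replace $z_e^{\bullet 4}$ using \eqref{HamCayley1}; since $b(\bar z_e,e)=0$ and $b(\bar z_e,z_e)=|z|^2$, the term $b(\bar z_e,z_e^{\bullet 4})$ cancels the remaining summand up to $\frac13\,\sigma_1(z_e^{\bullet 3})\,|z|^2=2|z|^2 f$, whence
$$
\sum_{i,j=1}^{N}\overline{f_{z_iz_j}}\,f_{z_i}f_{z_j}=\frac12|z|^2\,f(z).
$$
Thus \eqref{eq13} holds with the real-valued, everywhere regular weight $\mu(z)=\frac12|z|^2$, i.e.\ $f\in\mathcal{T}^N$; this is consistent, via \eqref{reh}, with the real identity $\Delta_\infty f=\frac12|x|^2 f$ obtained above.

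The only genuinely nontrivial points are the systematic reduction of the gradient/Hessian expressions to the one-variable trace forms $\sigma_1(z_e^{\bullet k})$ — which hinges entirely on the associativity \eqref{asso1} of $\sigma_1$ and on $\sigma_1(z_e)=0$ — and the single decisive cancellation supplied by the Hamilton--Cayley relations \eqref{HamCayley1}, \eqref{HamCayley3}. One must still check that these relations, derived on the real traceless subspace, remain valid for complex $z_e$ (immediate, since both sides are polynomials in the coordinates), and recall the trace identity $\trace L_x=\frac{N+1}{4}\sigma_1(x)$ used to get $\Delta f=0$; everything else is routine bookkeeping.
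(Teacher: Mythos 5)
Your proof is correct and takes essentially the same route as the paper: the same Jordan-algebraic formulas $f_{z_i}=\frac12 b(z_e^{\bullet2},e_i)$, $f_{z_iz_j}=b(z_e,e_i\bullet e_j)$, harmonicity from the trace identity (equivalent to the paper's $\sum_i e_i^{\bullet2}=\frac{N+1}{4}e$), and the reduction via associativity of $\sigma_1$ plus the Hamilton--Cayley relations to $\Delta_1 f=-\frac12|x|^2f$ and $\sum_{i,j}\overline{f_{z_iz_j}}f_{z_i}f_{z_j}=\frac12|z|^2f$. The only (immaterial) organizational difference is that the paper computes a single bilinear sum $S(w_e,z_e)$ and specializes $w_e=z_e$ and $w_e=\bar z_e$, while you treat the real and complex cases separately, using \eqref{HamCayley3} in the former and \eqref{HamCayley1} in the latter.
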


\begin{proof}
We have for the directional derivative $\partial_{z_i}z_e=e_i$, therefore
\begin{equation}\label{direct}
\begin{split}
f_{z_j}&=\half16(b(e_j\bullet z_e,z_e)+b(z_e\bullet e_j,z_e)+b(z_e\bullet z_e,e_j))=\half12b(z_e\bullet z_e,e_j)\\
&=\half12b(z_e^{\bullet2},e_j),
\end{split}
\end{equation}
and similarly
\begin{equation}\label{direct1}
f_{z_jz_k}=\half12(b(e_k\bullet z_e,e_j)+b(z_e\bullet e_k,e_j))= b(z_e, e_j\bullet  e_k).
\end{equation}
Also, if $K=\mathbb{C}$ then since $f_{z_jz_k}$ is a linear form in $z_e$ with real coefficients, we also have
\begin{equation}\label{direct2}
\overline{f_{z_jz_k}}=b(\bar z_e, e_j\bullet  e_k), \qquad \text{where }\quad \bar z_e:=\sum\nolimits_{i=1}^{N}\bar z_ie_i.
\end{equation}
For a general field $K$, we have for the Laplacian
$$
\Delta f=\sum_{j=1}^Nf_{z_jz_j}=\sum_{j=1}^Nb(z_e, e_j\bullet  e_j)
$$
The extended system $\{e_i\}_{0\le i\le N}$ with $e_0=\half12 e$ is an orthonormal basis of $W$. Since the algebra $W$ is a \textit{simple} Euclidean Jordan algebra, one has $\sum_{i=0}^{N}e_i^{\bullet 2}=\half{N+1}{4}\,e$, see \cite [Exercise~6, p.~59]{FKbook}. It follows that $\sum_{i=1}^{N}e_i^{\bullet 2}=\half{N}{4}e$, therefore $$\Delta f=\half{N}{4}b(z_e,e)=0$$ by virtue of $z_e\in V$. This proves that $f$ is harmonic.

To optimize the further calculations, we find the following sum:
\begin{equation}\label{summ}
S:=\sum_{j,k=1}^Nb(w_e, e_j\bullet  e_k)b(z_e^{\bullet 2}, e_j)
b(z_e^{\bullet 2}, e_k), \qquad w_e=\sum\nolimits_{i=1}^{N} w_ie_i,
\end{equation}
where $(w_1,\ldots, w_N)\in K^N$. Note that
$$
\sum_{k=1}^{N}b(x,e_k)b(y,e_k) =b(x,y)-b(x,e_0)b(y,e_0)=
b(x,y)-\frac{b(x,e)b(y,e)}{4}
$$
for any $x,y\in W$.
This yields by virtue of $b(z_e,e)=0$ that
\begin{align*}
\sum_{k=1}^{N}b(w_e, e_j\bullet e_k) b(z_e^{\bullet2},e_k) &=\sum_{k=1}^{N}b(w_e\bullet e_j, e_k) b(z_e^{\bullet2},e_k)\\
&=b(w_e\bullet e_j,z_e^{\bullet2}) -\half14b(w_e\bullet e_j,e)b(z_e^{\bullet2},e)\\
&=b(w_e\bullet z_e^{\bullet2},e_j)-\half14b(w_e, e_j)b(z_e,z_e),
\end{align*}
therefore, arguing similarly we obtain
\begin{align*}
S(w_e,z_e)&=\sum_{j=1}^{N}\biggl(b(w_e\bullet z_e^{\bullet2},e_j)-\half14b(w_e, e_j)b(z_e,z_e)\biggr)
b(z_e^{\bullet2},e_j)\\
&=b(w_e\bullet z_e^{\bullet2}, \, z_e^{\bullet2})
-\half14b(w_e\bullet z_e^{\bullet2},e)b(z_e^{\bullet2},e)
-\half14b(w_e, z_e^{\bullet2})b(z_e,z_e)\\
&=b(w_e, \, z_e^{\bullet4})
-\half12b(w_e, z_e^{\bullet2})b(z_e,z_e)\\
&=b(w_e, \, z_e^{\bullet4}-\half12 \sigma_1(z_e^{\bullet2})z_e^{\bullet2})\\
&=b(w_e, \,\half13 \sigma_1(z_e^{\bullet3})z_e)\qquad \text{by \eqref{HamCayley1}}\\
&=2f(z)b(w_e, \,z_e).
\end{align*}
Thus,
\begin{equation}\label{ssum}
S(w_e,z_e)=\sum_{j,k=1}^Nb(w_e, e_j\bullet  e_k)b(z_e^{\bullet 2}, e_j)
b(z_e^{\bullet 2}, e_k)=2f(z)b(w_e, \,z_e).
\end{equation}
Taking into account the harmonicity of $f$ and  setting $w_e=z_e$ in \eqref{ssum}  yields
$$
\Delta_1 f=-\half14S(z_e,z_e)=-\half12b(z_e, \,z_e)f(z)=-\half12(z_1^2+\ldots+z_N^2)f(z),
$$
which proves that $f$ is a Hsiang eigencubic (with $\lambda=-\half12$ in \eqref{Hs1}). Next, assuming that $K=\mathbb{C}$ and setting $w_e=\bar z_e$ in \eqref{ssum},
we get
\begin{equation}\label{Teq}
\sum_{j,k=1}^m\overline{f_{z_iz_j} }f_{z_i}f_{z_j}=\half14S(\bar z_e,z_e)=\half12f(z)b(w_e, \,z_e)=\half12(z_1\bar z_1+\ldots+z_N\bar z_N)f(z),
\end{equation}
hence $f\in \mathcal{T}^N$, as desired. The proposition follows.
\end{proof}

In summary, Proposition~\ref{pro:FH} yields three (noncongruent) Hsiang eigencubics $f_d$ such that $f_d\in \mathcal{T}^N$, each in dimensions $N=3+6d$, where $d=1,2,4$. It is convenient to think of the cubic form $f_0$ given by \eqref{F0} as the member corresponding to $d=0$. In fact, these $f_d$, $d\in \{0,1,2,4\}$ are exactly the only possible  Hsiang eigencubics having the Peirce dimension $n_1=0$, see Remark~\ref{rem:Hs} above.

In the rest of this section, we exhibit some explicit representations. Let $\F_d$ denote an associative  real division algebra of dimension $d\in \{1,2,4\}$, and $\F_0=0$. A generic element of  $W=\mathfrak{h}_4(\mathbb{A}_d)$ is the Hermitian matrix
$$
x:=\left(
     \begin{array}{cccc}
       w_1 & z_1 & z_3 & z_5 \\
       \bar z_1 & w_2 & z_6 & z_4 \\
       \bar z_3 & \bar z_6 & w_3 & z_2 \\
       \bar z_5 & \bar z_4 & \bar z_2 & w_4 \\
     \end{array}
   \right), \qquad z_i\in \F_d,\,w_i\in \R{}.
$$
Then the linear trace form is determined by
$$
\sigma_1(x)=\trace x=w_1+w_2+w_3+w_4,
$$
hence the inner product quadratic form is defined by
$$
b(x,x)=\sigma_1(x^{\bullet 2})=\sigma_1(x^2)=\sum_{i=1}^4w_i^2+2\sum_{j=1}^6|z_j|^2.
$$
The trace free subspace $V$ of $W$ consists of the matrices $x_v$ with $w_i$ given  by
\begin{equation}\label{www}
\begin{split}
w_1&=\half1{\sqrt{2}}(v_1+v_2+v_3),\\
w_2&=\half1{\sqrt{2}}(v_1-v_2-v_3),\\
w_3&=\half1{\sqrt{2}}(-v_1+v_2-v_3),\\
w_4&=\half1{\sqrt{2}}(-v_1-v_2+v_3),
\end{split}\end{equation}
where $v=(v_1,v_2,v_3)\in\R{3}$. In this notation
\begin{equation}\label{fdd}
f_d=\half16\sigma_1(x_v^{\bullet 3})=\half16 \trace x_v^3
\end{equation}

In the trivial case $d=0$, all off-diagonal terms vanish: $z_k=0$, hence
$$
f_0=\half16 \sum_{i=1}^4w_i^3=\sqrt{2}v_1v_2v_3,
$$
cf. \eqref{F0}. The first nontrivial example is obtained for $d=1$:
\begin{align*}
f_1&=\sqrt {2}v_1v_2v_3+\half{1}{\sqrt{2}}\left( (z_1^2-z_2^2)v_1+(z_3^2-z_4^2)v_2+(z_5^2-z_6^2)v_3 \right) \\ &+z_{{2}}z_{{4}}z_6+z_{{2}}z_{{3}}z_{{5}} +z_{{1}}z_{{3}}z_6+z_{{1}}z_{{4}}z_{{5}}, \qquad\qquad\qquad\qquad z_j,w_k\in\R{}.
\end{align*}
It is straightforward to verify that in the new orthonormal coordinates
$$
z'_{2i-1}=\pm\half1{\sqrt{2}}(z_{2i-1}+z_{2i}), \quad
z'_{2i}=\pm\half1{\sqrt{2}}(z_{2i-1}-z_{2i}),\qquad i=1,2,3,
$$
where the signs $\pm$ are chosen appropriately,
$f_1$ becomes the usual determinant
$$
f=\left|
    \begin{array}{ccc}
      v_1 & z_6' & z_3' \\
      z_5' & v_2 & z_2' \\
      z_4' & z_1' & v_3 \\
    \end{array}
  \right|,
$$
thus bringing us back to the determinant varieties discussed  in  Example~\ref{pro:det} for $m=3$.

Remarkably,  the case $d=2$ ($N=15$) can also be interpreted in related terms, namely as the Pfaffian $P_3$ in notation of \cite{HopLT}. We also remark that the eigencubics in dimensions $N=9$ and $N=15$ were discovered by another  method by  Wu-Yi Hsiang, see Examples 1 and 2 in \cite{Hsiang67}. The nature of the 27-dimensional example is more subtle. We mention that this example was constructed  explicitly   using octonions by Liu Tongyan in \cite{Liu}.

Finally, remark that it follows from Proposition~\ref{pro:51}

\begin{corollary}
\label{pro:last}
If $f_d$ is a Hsiang cubic defined by \eqref{fdd} over $K=\mathbb{C}$ then  $(\re f_d(z),\im f_d(z))$ are orthogonal twin-harmonics in $\R{6+12d}$, $d=0,1,2,4$.
\end{corollary}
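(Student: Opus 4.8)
The plan is to read the statement off from the two structural facts already established. First I would fix $N=3+6d$ and recall that Proposition~\ref{pro:FH} asserts exactly $f_d\in\mathcal{T}^{N}$ for $d\in\{1,2,4\}$, the decisive identity being \eqref{Teq}, namely $\sum_{j,k}\overline{(f_d)_{z_jz_k}}\,(f_d)_{z_j}(f_d)_{z_k}=\frac12\,(z_1\bar z_1+\dots+z_N\bar z_N)\,f_d$, so that the weight in \eqref{eq13} is the explicit real-valued function $\mu(z)=\frac12|z|^2$, which is regular everywhere and nonvanishing off the origin (where the cone is singular anyway). For the degenerate member $d=0$ one has $f_0=\sqrt2\,v_1v_2v_3$, i.e. up to scaling and an orthogonal change of coordinates $f_0(z)=z_1z_2z_3$, which lies in $\mathcal{T}^{3}$ because each coordinate function is trivially in $\mathcal{T}^{1}$ and the class is closed under products by Proposition~\ref{pro:4}(ii); alternatively one uses the direct verification mentioned after \eqref{F0}.

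With $f_d\in\mathcal{T}^{N}$ in place, I would then simply invoke Proposition~\ref{pro:51}, which states that for any $h\in\mathcal{T}^{m}$ the pair $(\re h,\im h)$ consists of orthogonal twin-harmonics in $\R{2m}\cong\mathbb{C}^{m}$. Applying it with $m=N=3+6d$ produces orthogonal twin-harmonics in $\R{2N}$, and the only thing left is the arithmetic $2N=2(3+6d)=6+12d$, which is precisely the claimed ambient dimension (so $\R{6},\R{18},\R{30},\R{54}$ for $d=0,1,2,4$). As a consistency check I would note that the common weight $\frac12|z|^2$ reproduces, via \eqref{reh} and \eqref{eq2}, the Hsiang eigenrelation $\Delta_1 f_d=-\frac12|z|^2 f_d$ of Proposition~\ref{pro:FH}, and that consequently $\re f_d=0$ and $\im f_d=0$ are minimal cones in $\R{6+12d}$ (Corollary~\ref{cor:im}) while $\arg f_d$ is perfectly harmonic there (Corollary~\ref{cor:rhol}).

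I do not expect any real obstacle here: all the analytic and Jordan-algebraic work has already been done in the proof of Proposition~\ref{pro:FH}, through the auxiliary sum $S(w_e,z_e)$ and the Hamilton--Cayley identity \eqref{HamCayley1}. The only points calling for a moment's attention are bookkeeping ones --- keeping straight that the index $m$ in the definition of $\mathcal{T}^{m}$ is the \emph{complex} dimension, here $N$, so that the real ambient space of the resulting twin-harmonics has dimension $2N$ and not $N$; and checking that the three cases $d=1,2,4$ are genuinely covered, which they are since Proposition~\ref{pro:FH} is proved uniformly in $d$ via \eqref{fdd}.
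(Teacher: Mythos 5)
Your proposal is correct and follows essentially the same route as the paper: the corollary is exactly the combination of Proposition~\ref{pro:FH} (which gives $f_d\in\mathcal{T}^{N}$ with $N=3+6d$ for $d=1,2,4$) with Proposition~\ref{pro:51}, together with the dimension count $2N=6+12d$. Your separate treatment of $d=0$ via Proposition~\ref{pro:4}(ii) is just a clean way of carrying out the ``straightforward verification'' that $f_0\in\mathcal{T}^3$ already alluded to after \eqref{F0}, so there is no substantive difference from the paper's argument.
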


We briefly illustrate the latter property, we consider the  simplest particular case $d=0$, when $f_0(x)=x_1x_2x_3$ (dropping off the non-essential constant factor). Then
$$
f_0(x_1+ix_4,x_2+ix_5,x_3+ix_6)=u(x)+iv(x), \quad x=(x_1,\ldots,x_6)\in \R{6},
$$
and the corresponding twin-harmonics are
\begin{align*}
u(x)&=x_1x_2x_3-x_1x_5x_6-x_4x_2x_6-x_3x_4x_5,\\
v(x)&=x_1x_2x_6+x_1x_3x_5+x_2x_3x_4-x_4x_5x_6
\end{align*}
are the desired orthogonal twin harmonics in $\R{6}$. It is straightforward to verify that $u$ and $v$ satisfy (\ref{Hs1}) with $\lambda=-2$.

\section{Acknowledgement}

We thank H.Lee and V.V.Sergienko for collaboration at initial stages of our investigations, and  the referee for constructive suggestions.


\def\cprime{$'$}

\end{document}